\numberwithin{equation}{section}
\DeclareMathOperator{\lcm}{lcm}
\newtheorem{thm}{Theorem}[section]
\newtheorem{lem}{Lemma}[section]
\newtheorem{hyp}{Hypothesis}[section]
\newtheorem{prob}{Problem}[section]
\newtheorem{cor}{Corollary}[section]
\newtheorem{dfn}{Definition}[section]
\newtheorem{rmk}{Remark}[section]
\newcommand{\N}{\mathbb{N}}
\newcommand{\Z}{\mathbb{Z}}
\newcommand{\R}{\mathbb{R}}
\newcommand{\C}{\mathbb{C}}
\title{Note on the Chowla Conjecture and the Discrete Fourier Transform}
\date{}
\author{N. A. Carella}
\begin{document}
\maketitle

\begin{abstract}
Let $x\geq 1$ be a large prime number, and let $\mu(n)\in\{-1,0,1\}$ be the Mobius function. This note shows that the average value of the autocorrelation over $t\ne0$ satisfies 
$x^{-1}\sum_{1\leq t\leq x,}\sum_{n \leq x} \mu(n) \mu(n+t) =O\left( x(\log x)^{-c}\right)$. Furthermore, 
autocorrelation of the periodic Mobius function of period $x$ has an asymptotic formula of the form $\sum_{n \leq x} \mu(n+a_0) \mu(n+a_1)\cdots\mu(n+a_{k-1}) =O\left( x(\log x)^{-c}\right)$, where $a_0<a_1<\cdots<a_{k-1}<x$ is an integer $k$-tuple, $c>0$ is an arbitrary constant. \let\thefootnote\relax\footnote{\today \date{} \\
	\textit{AMS MSC}: Primary 11N37, Secondary 11L03, 11K31. \\
	\textit{Keywords}: Arithmetic function; Mobius function; Liouville function; Autocorrelation function; Correlation function; Chowla conjecture; Discrete Fourier transform.}
\end{abstract}

\tableofcontents

%SSSSSSSSSSSSSSSSSSSSSSSSSSSSSSSSSSSSSSSSSSSSSSSSS%SSSSSSSSSSSSSSSSSSSSSSSSSSSSSSSSSSSSSSSSSSSSSSSSS
%SSSSSSSSSSSSSSSSSSSSSSSSSSSSSSSSSSSSSSSSSSSSSSSSS
%SSSSSSSSSSSSSSSSSSSSSSSSSSSSSSSSSSSSSSSSSSSSSSSSS
\section{Introduction} \label{S5757}
The symbols $\mathbb{N} = \{ 0, 1, 2, 3, \ldots \}$ and $\mathbb{Z} = \{ \ldots, -3, -2, -1, 0,  1, 2, 3, \ldots \}$ denote the subsets of integers. For $n \in \mathbb{N}$, the Mobius function $\mu:\mathbb{N} \longrightarrow \{-1,0,1\}$ is defined by
\begin{equation}\label{eq5757.100A}
	\mu(n) =
	\left \{
	\begin{array}{ll}
		(-1)^{w}     &n=p_1 p_2 \cdots p_w\\
		0           &n \ne p_1 p_2 \cdots p_w,\\
	\end{array}
	\right .
\end{equation}
where the $p_i\geq 2$ are primes. The Chowla conjecture is a quantitative statement on the asymptotic formula for the arithmetic average
\begin{equation} \label{eq5757.200A}
\sum_{n \leq x} \mu(n+a_0) \mu(n+a_1)\cdots \mu(n+a_{k-1}) =o(x), 
\end{equation}	
where $a_i\ne a_j$ for $i\ne j$, see \cite{CS1965}, \cite{RO2018}. 
This problem has an extensive literature, and there are sharper conjectures such as
\begin{equation} \label{eq5757.205A}
\sum_{n \leq x} \mu(n+a_0) \mu(n+a_1)\cdots \mu(n+a_{k-1}) =O\left(\sqrt{x\log \log x} \right) , 
\end{equation}	
as explicated in \cite[Theorem 1.3]{KS2022}. The very best estimates in the literature, for $k=2$, have asymptotic formulae for the logarithmic average of the forms
\begin{equation} \label{eq5757.210A}
\sum_{n \leq x} \frac{\mu(n) \mu(n+t)}{ n}=O \left (\frac{\log x}{\sqrt{ \log \log x }} \right ),
\end{equation} 
or weaker, where $t \ne0$ is a fixed integer, see \cite[Corollary 1.5]{HR2021}. A short calculation shows that \eqref{eq5757.210A} implies the arithmetic average
\begin{equation} \label{eq5757.210}
\sum_{n \leq x} \mu(n) \mu(n+t)=O \left (\frac{ x}{\sqrt{ \log \log x }} \right ),
\end{equation} 
see Theorem \ref{thm4742MN.321a}. \\

The first two results within deal with the average value of the autocorrelation function \eqref{eq5757.210}. The error term of the asymptotic formula for the uniformly weighted average in Theorem \ref{thm5757MA.555A} is slightly smaller than the error term of the asymptotic formula for the harmonic weighted average in Theorem \ref{thm1279MA.555B} in Section \ref{S1279MA}. Both of these asymptotic formulas prove that the autocorrelation function exhibits sufficient cancellations for any $t\ne0$ on average.

\begin{thm} \label{thm5757MA.555A} Let $\mu:\mathbb{N} \longrightarrow \{-1,0,1\}$ be the Mobius function. If $x\geq1$ is a large integer, and $t\leq x$ is an integer, then the uniformly weighted average satisfies 
	\begin{equation} \label{eq1279VM.400B}
		\frac{1}{x}\sum_{1\leq t\leq x,}	\sum_{n \leq x} \mu(n) \mu(n+t) =O\left( \frac{x}{(\log x)^{2c}}\right)\nonumber, 
	\end{equation}	
	where $c>0$ is an arbitrary constant.
\end{thm}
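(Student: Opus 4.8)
The plan is to collapse the double sum by interchanging the order of summation so that the $t$-summation is carried out first, for each fixed $n$: this turns the inner average over shifts into a telescoping difference of values of the M\"obius summatory function $M(y)=\sum_{1\le m\le y}\mu(m)$, after which the quantitative prime number theorem finishes the job. Concretely, interchanging the order of summation,
\begin{equation}
\sum_{1\le t<x}\ \sum_{1\le n<x}\mu(n)\mu(n+t)
=\sum_{1\le n<x}\mu(n)\sum_{m=n+1}^{n+x-1}\mu(m)
=\sum_{1\le n<x}\mu(n)\bigl(M(n+x-1)-M(n)\bigr).
\end{equation}
By the triangle inequality, the absolute value of this double sum is at most $\sum_{1\le n<x}|M(n+x-1)|+\sum_{1\le n<x}|M(n)|$.

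The only analytic input is the classical de la Vall\'ee Poussin bound $M(y)=O\bigl(y\exp(-c_0\sqrt{\log y})\bigr)$; a fortiori, for each fixed $A>0$ there is a constant $C_A$ with $|M(y)|\le C_A\,y(\log y)^{-A}$ for all $y\ge2$, the short initial segment on which this is not yet effective being absorbed into $C_A$. Combining this with the elementary estimate $\sum_{2\le n\le y}n(\log n)^{-A}=O_A\bigl(y^2(\log y)^{-A}\bigr)$ --- split the sum at $\sqrt y$: the terms with $n\le\sqrt y$ contribute $O(y)$, while for $\sqrt y<n\le y$ one has $\log n>\tfrac{1}{2}\log y$, so those terms contribute $O_A\bigl(y^2(\log y)^{-A}\bigr)$ --- and observing that the single exceptional term $M(1)=\mu(1)$ contributes only $O(1)$, we obtain
\begin{equation}
\Bigl|\ \sum_{1\le t<x}\ \sum_{1\le n<x}\mu(n)\mu(n+t)\ \Bigr|\ \ll_A\ \frac{x^2}{(\log x)^{A}}
\end{equation}
for every fixed $A>0$. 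Dividing by $x$ and specializing to $A=2c$ yields $O\bigl(x(\log x)^{-2c}\bigr)$, which is the asserted estimate.

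There is no essential obstacle here: once the shift-sum telescopes, the statement is merely a repackaging of the prime number theorem for the ordinary M\"obius summatory function. The only point that needs a little care is the uniformity in $x$ of the bound $|M(y)|\ll_A y(\log y)^{-A}$ --- the implied constant must be chosen independently of $x$, absorbing the bounded initial range in which the asymptotic is not yet effective --- together with the harmless disposal of the single term $n=1$, for which $\log n=0$. Feeding in instead the sharper bound $M(y)\ll y\exp(-c_0\sqrt{\log y})$ throughout produces the stronger conclusion $O\bigl(x\exp(-c_1\sqrt{\log x})\bigr)$ for some $c_1>0$, of which Theorem \ref{thm5757MA.555A} is a weak corollary.
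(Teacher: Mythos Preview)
Your argument is correct, and it takes a genuinely different (and considerably more direct) route than the paper's proof.

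The paper works on the spectral side: it introduces the discrete Fourier transform of $R(t)=\sum_{n<x}\mu(n)\mu(n+t)$, factors $\hat R(s)$ via the convolution theorem into a product of two twisted M\"obius sums, bounds each factor using Davenport's estimate $\sup_{\alpha}\bigl|\sum_{n<x}\mu(n)e^{i2\pi n\alpha}\bigr|\ll x(\log x)^{-c}$ (Theorem~\ref{thm3970ME.300}), and then inverts and averages over $t$, invoking an auxiliary Cauchy--Schwarz argument (Lemmas~\ref{lem4747.800} and~\ref{lem4747.900}) to control the shifted factor $\hat f(n,s)$.

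You instead stay entirely on the physical side: after interchanging the order of summation, the inner $t$-sum telescopes to $M(n+x-1)-M(n)$, and the classical bound $M(y)\ll y\exp(-c_0\sqrt{\log y})$ (i.e.\ Davenport's theorem at the single frequency $\alpha=0$, which is just the quantitative prime number theorem) immediately gives $\ll_A x^2(\log x)^{-A}$ for the double sum. Your handling of the small-$n$ range and of the partial summation estimate $\sum_{n\le y}n(\log n)^{-A}\ll_A y^2(\log y)^{-A}$ is clean and correct.

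What each approach buys: yours is shorter, uses strictly weaker input (only the prime number theorem rather than uniform exponential-sum cancellation over all $\alpha$), and in fact yields the sharper bound $O\bigl(x\exp(-c_1\sqrt{\log x})\bigr)$ that you note at the end. The paper's Fourier framework is not needed for the \emph{averaged} statement; its intended payoff is that the same machinery is reused verbatim for the non-averaged correlations in Sections~\ref{S4747MN}--\ref{S474kMN}, where a direct telescoping is unavailable.
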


The results for the autocorrelation function on average given in Theorem \ref{thm5757MA.555A} and Theorem \ref{thm1279MA.555B} are significantly sharper than the very best in the current literature. The very best result in the literature for the uniform weighted average over the interval $[1,x]$ has something of the form
	\begin{equation} \label{eq1279VM.400Z}
	\frac{1}{x}\sum_{1\leq t\leq x,}		\sum_{n \leq x} \mu(n) \mu(n+t) =O\left(x\frac{\log\log x}{\log x}\right)\nonumber, 
\end{equation}	
confer \cite[Theorem 1]{MR2015} for the finer details, which is remarkably weaker.

\begin{thm} \label{thm5757MN.321} Let $x$ be a large prime number, and let $\mu:\mathbb{N} \longrightarrow \{-1,0,1\}$ be the periodic Mobius function. If $t\ne0$ is an integer, then, 
	\begin{equation} \label{eq5757MN.321}
	\sum_{n \leq x} \mu(n) \mu(n+t) =O\left( \frac{x}{(\log x)^{2c}}\right)\nonumber, 
	\end{equation}	
	where $c>0$ is an arbitrary constant.
\end{thm}	
The more general version of the $k$-correlation function, stated below, can be computed recursively. 
\begin{thm} \label{thm5757MN.121} Let $x>1$ be a large prime number. Assume that the Mobius function $\mu:\mathbb{N} \longrightarrow \{-1,0,1\}$ is extended to a periodic function of period $x$ on the set of integers. If $a_0< a_1<\cdots<a_{k-1}<x$ is an integer $k$-tuple then, 
\begin{equation} \label{eq5757MN.121}
\sum_{n \leq x} \mu(n+a_0) \mu(n+a_1)\cdots \mu(n+a_{k-1}) =O\left( \frac{x}{(\log x)^{2c}}\right)\nonumber, 
\end{equation}	
where $c>0$ is an arbitrary constant.
\end{thm}	

Precisely the same result is valid for the Liouville function. Except for minor notational changes, and implied constant, everything is exactly the same. 

\begin{thm} \label{thm5757LN.121} Let $x>1$ be a large prime number. Assume that the Liouville function $\lambda:\mathbb{N} \longrightarrow \{-1,1\}$ is extended to a periodic function of period $x$ on the set of integers. If $a_0<a_1<\cdots<a_{k-1}<x$ is an integer $k$-tuple then, 
	\begin{equation} \label{eq5757LN.121}
		\sum_{n \leq x} \lambda(n+a_0) \lambda(n+a_1)\cdots \lambda(n+a_{k-1}) =O\left( \frac{x}{(\log x)^{2c}}\right)\nonumber, 
	\end{equation}	
	where $c>0$ is an arbitrary constant.
\end{thm}	

The proofs of Theorem \ref{thm5757MA.555A}, and Theorem \ref{thm5757MN.321}, based on standard results in analytic number theory, and the discrete Fourier transform, are given in Section \ref{S1279MA} and Section \ref{S4747MN} respectively. Similarly, the proofs of Theorem \ref{thm5757MN.121}, and Theorem \ref{thm5757LN.121} are given in Section \ref{S4747K}. \\
 
Probably, the same technique can be extended to the more general finite sum
\begin{equation} \label{eq5757MN.131}
\sum_{n \leq x} \mu(n+a) \mu(f(n)), 
\end{equation}	
where $f(t)\in\Z[t]$ is an irreducible quadratic polynomial.

%SSSSSSSSSSSSSSSSSSSSSSSSSSSSSSSSSSSSSSSSSSSSSSS
%SSSSSSSSSSSSSSSSSSSSSSSSSSSSSSSSSSSSSS
%SSSSSSSSSSSSSSSSSSSSSSSSSSSSSSSSSSSSSSSSSSSSSSS
\section{Discrete Fourier Transform}\label{S8383DFT}
\begin{dfn}\label{dfn8383DFT.100}{\normalfont 
		Let $x\geq 1$ be an integer, and let $f:\N\longrightarrow \C$ be a function. The discrete Fourier transform and its inverse are defined by
		\begin{equation}\label{eq8383DFT.100A}
			\hat{f}(s)=\sum_{1\leq n\leq x}f(n)e^{i2\pi sn/x}
		\end{equation}
		and 
		\begin{equation}\label{eq8383DFT.100B}
			f(n)=\frac{1}{x}\sum_{1\leq s\leq x}\hat{f}(s)e^{-i2\pi sn/x},
		\end{equation}
		respectively.		
	}
\end{dfn} 
Some details on discrete Fourier transform are discussed in \cite[Section 3.11v]{DLMF}.
\begin{thm} \label{thm8383DFT.900} {\normalfont (Linear Convolution Theorem)} Let $f,g: \N \longrightarrow\C$ be a pair of functions. 
	Then, discrete Fourier transform of the correlation function $R(t)=\sum_{n\leq x}f(n)g(n+t)$ is equal to the product of the individual discrete Fourier transforms up to a shift:
	\begin{equation}\label{eq8383DFT.910}
		\widehat{R}(s)=\sum_{1\leq t\leq x}R(t)e^{i2\pi stn/x}=\hat{f}(-s)\hat{g}(n,s).
	\end{equation}	
	
\end{thm}
\begin{proof}[\textbf{Proof}] Use the definition of the discrete Fourier transform of the correlation function to compute it:
	\begin{eqnarray}\label{eq8383DFT.920}
		\widehat{R}(s)&=&\sum_{1\leq t\leq x}\left (\sum_{1\leq n\leq x}f(n)g(n+t)\right )e^{i2\pi st/x}\\
		&=&\sum_{1\leq n\leq x,}\sum_{n+1\leq m<x+n}f(n)g(m)e^{i2\pi s(m-n)/x}\nonumber\\
		&=&\sum_{1\leq n<x}f(n)e^{-i2\pi sn/x}\sum_{n+1\leq m<x+n}g(m)e^{i2\pi sm/x}\nonumber\\
		&=&\hat{f}(-s)\hat{g}(n,s)\nonumber	.
	\end{eqnarray}	
	Conversely, the product $\hat{f}(-s)\hat{g}(n,s)$ is the discrete Fourier transform of the correlation function $R(t)$. 
\end{proof}
Since the factor $\hat{g}(n,s)$ is dependent on $n\leq x$, the precise calculation of the spectrum requires a vector 
\begin{equation}\label{eq1279VM.200C}
	\mu(1), \mu(2),\ldots, \mu(x),\mu(x+1), \mu(x+2),\ldots, \mu(2x)
\end{equation}
of $2x$ values. If the a vector of Mobius function values is extended to a periodic function
\begin{equation}\label{eq1279VM.210C}
	\mu(1), \mu(2),\ldots, \mu(x),\mu(1), \mu(2),\ldots, \mu(x),\mu(1), \mu(2),\ldots, \mu(x),\ldots,
\end{equation}
then, the dependence on the variable $n$ in
\begin{equation}\label{eq8383DFT.900C}
	\hat{g}(n,s)=\sum_{n+1\leq m<x+n}g(m)e^{i2\pi sm/x}	
\end{equation} 
is not present in the circular convolution.
\begin{thm} \label{thm8383DFT.900C} {\normalfont (Circular Convolution Theorem)} Let $f,g: \N \longrightarrow\C$ be a pair of functions. 
	Then, the discrete Fourier transform of the correlation function $R(t)=\sum_{n\leq x}f(n)g(n+t)$ is equal to the product of the individual discrete Fourier transforms:
	\begin{equation}\label{eq8383DFT.910C}
		\widehat{R}(s)=\sum_{1\leq t\leq x}R(t)e^{i2\pi st/x}=\hat{f}(-s)\hat{g}(s).
	\end{equation}	
\end{thm}
\begin{proof}[\textbf{Proof}] The summation index is reduced modulo $x$. Thus,  \begin{equation}\label{eq8383DFT.920C}
		\hat{g}(n,s)=\sum_{n+1\leq m\leq x+n}g(m)e^{i2\pi sm/x}	=\sum_{1\leq m<x}g(m)e^{i2\pi sm/x}=\hat{g}(s)
	\end{equation}
as claimed.
\end{proof}
%SSSSSSSSSSSSSSSSSSSSSSSSSSSSSSSSSSSSSSSSSSSSSSSS
%SSSSSSSSSSSSSSSSSSSSSSSSSSSSSSSSSSSSSSSSSSSSSSSS
%SSSSSSSSSSSSSSSSSSSSSSSSSSSSSSSSSSSSSSSSSSSSSSSS
\subsection{Results for the Liouville and Mobius Functions}\label{S2222}
Some standard results required in the proofs of the correlations functions are recorded in this section. The Mobius function is defined in \eqref{eq5757.100B}. Similarly, for $n \in \mathbb{N}$, the Liouville function $\lambda:\mathbb{N} \longrightarrow \{-1,1\}$ is defined by
\begin{equation}\label{eq5757.100B}
	\lambda(n) =(-1)^{a_1+a_2+\cdots+a_w},
\end{equation}
where $n=p_1^{a_1} p_2^{a_2} \cdots p_w^{a_w}$, the $p_i\geq 2$ are primes, and $a_k\geq0$.

%SSSSSSSSSSSSSSSSSSSSSSSSSSSSSSSSSSSSSSSSSSSSSSSS
%SSSSSSSSSSSSSSSSSSSSSSSSSSSSSSSSSSSSSSSSSSSSSSSS
%SSSSSSSSSSSSSSSSSSSSSSSSSSSSSSSSSSSSSSSSSSSSSSSS
\subsection{Average Orders of Mobius Functions}\label{S2222MN}
\begin{thm} \label{thm2222.500} If $\mu: \N\longrightarrow \{-1,0,1\}$ is the Mobius function, then, for any large number $x\geq1$, the following statements are true.
	\begin{enumerate} [font=\normalfont, label=(\roman*)]
\item $\displaystyle \sum_{n \leq x} \mu(n)=O \left (xe^{-c\sqrt{\log x}}\right )$, \tabto{8cm} unconditionally,
\item $\displaystyle \sum_{n\leq x}\frac{\mu(n)}{n}=O\left( e^{-c\sqrt{\log x}} \right ), $ \tabto{8cm} unconditionally,
	\end{enumerate}where $c>0$ is an absolute constant.
\end{thm}
\begin{proof}[\textbf{Proof}]  See \cite[p.\ 6]{DL2012}, \cite[p.\ 182]{MV2007}, et alii.   
\end{proof}

%SSSSSSSSSSSSSSSSSSSSSSSSSSSSSSSSSSSSSSSSSSSSSSSS
%SSSSSSSSSSSSSSSSSSSSSSSSSSSSSSSSSSSSSSSSSSSSSSSS
%SSSSSSSSSSSSSSSSSSSSSSSSSSSSSSSSSSSSSSSSSSSSSSSS
\subsection{Equidistribution Theorems}\label{S2225P}
There are several mean values and equidistribution results for arithmetic functions over arithmetic progressions of level of distribution $\theta<1/2$. The best known case is the Bombieri-Vinogradov theorem, see \cite[Theorem 15.4]{DL2012}, the case for the Mobius function is proved in \cite[Theorem 1]{WD1973} and \cite{SW1971} states the following.

\begin{cor} {\normalfont (\cite[Corollary 1]{SW1971} }\label{cor2225P.550}  Let $a\geq 1$ be a fixed parameter, and let $x \geq 1$ be a large number. If $C>0$ is a constant, then
	\begin{equation}
		\sum_{q\leq x^{1/2}/\log^B x} \max_{a \bmod q}\max_{z \leq x}  \bigg |\sum_{\substack{n \leq z \\
				n \equiv a \bmod q}} \mu(n) \bigg |\ll \frac{x}{(\log x)^{C}},
	\end{equation}where the constant $B>0$ depends on $C$.
\end{cor}

However, there is no comparable literature on the mean values and equidistribution for arithmetic functions over arithmetic progressions of shifted prime. A comparable result is expected to hold.

\begin{hyp} \label{hyp2225P.550}  Let $a\geq 1$ be a fixed parameter, and let $x \geq 1$ be a large number. If $C>0$ is a constant, then
	\begin{enumerate} [font=\normalfont, label=(\roman*)]
		\item $\displaystyle \sum_{q\leq x^{1/2}/\log^B x} \max_{a \bmod q}\max_{z \leq x}  \bigg |\sum_{\substack{p \leq z \\
				p \equiv d \bmod q}} \lambda(p+a) \bigg |\ll \frac{x}{(\log x)^{C}},$
		\item $\displaystyle \sum_{q\leq x^{1/2}/\log^B x} \max_{a \bmod q}\max_{z \leq x}  \bigg |\sum_{\substack{p \leq z \\
				p \equiv d \bmod q}} \mu(p+a) \bigg |\ll \frac{x}{(\log x)^{C}}, $ 
	\end{enumerate}
	where the constant $B>0$ depends on $C$.
\end{hyp}

A similar mean value for the autocorrelation over arithmetic progression is of interest in the theory of arithmetic correlation functions.
\begin{hyp} \label{hyp2225P.560}  Let $a,b\in \Z$ be a fixed pair of small integers such that $a\ne b$, and let $x \geq 1$ be a large number. If $C>0$ is a constant, then
	\begin{enumerate} [font=\normalfont, label=(\roman*)]
		\item $\displaystyle \sum_{q\leq x^{1/2}/\log^B x} \max_{d \bmod q}\max_{z \leq x}  \bigg |\sum_{\substack{p \leq z \\
				p \equiv d \bmod q}} \lambda(p+a) \lambda(p+b)\bigg |\ll \frac{x}{(\log x)^{C}},$
		\item $\displaystyle \sum_{q\leq x^{1/2}/\log^B x} \max_{d \bmod q}\max_{z \leq x}  \bigg |\sum_{\substack{p \leq z \\
				p \equiv d \bmod q}} \mu(p+a) \mu(p+b)\bigg |\ll \frac{x}{(\log x)^{C}}, $ 
	\end{enumerate}
	where the constant $B>0$ depends on $C$.
\end{hyp}

%SSSSSSSSSSSSSSSSSSSSSSSSSSSSSSSSSSSSSSSSSSSSSSSSS
%SSSSSSSSSSSSSSSSSSSSSSSSSSSSSSSSSSSSSSSSSSSS
%SSSSSSSSSSSSSSSSSSSSSSSSSSSSSSSSSSSSSSSSSSSS
\subsection{Twisted Exponential Sums}
One of the earliest result for exponential sum with multiplicative coefficients is stated below.
\begin{thm} \label{thm3970ME.300} {\normalfont (\cite{DH1937})} If $\alpha\ne0$ is a real number, and $c>0$ is an arbitrary constant, then
	\begin{enumerate} [font=\normalfont, label=(\roman*)]
	\item $\displaystyle \sup_{\alpha\in\R}\sum_{n \leq x} \mu(n)e^{i 2 \pi  \alpha n}<\frac{c_1x}{(\log x)^{c}}$, \tabto{8cm} unconditionally,
	\item $\displaystyle \sup_{\alpha\in\R}\sum_{n \leq x} \frac{\mu(n)}{n}e^{i 2 \pi  \alpha n}<\frac{c_2}{(\log x)^{c}}, $ \tabto{8cm} unconditionally,
\end{enumerate}
where $c_1=c_1(c)>0$ and $c_2=c_2(c)>0$ are constants depending on $c$, as the number $x \to \infty$.
\end{thm}
The same results are also valid for the Liouville function.
\begin{thm} \label{thm3970LE.300} {\normalfont (\cite{DH1937})} If $\alpha\ne0$ is a real number, and $c>0$ is an arbitrary constant, then
	\begin{enumerate} [font=\normalfont, label=(\roman*)]
		\item $\displaystyle \sup_{\alpha\in\R}\sum_{n \leq x} \lambda(n)e^{i 2 \pi  \alpha n}<\frac{c_3x}{(\log x)^{c}}$, \tabto{8cm} unconditionally,
		\item $\displaystyle \sup_{\alpha\in\R}\sum_{n \leq x} \frac{\lambda(n)}{n}e^{i 2 \pi  \alpha n}<\frac{c_4}{(\log x)^{c}}, $ \tabto{8cm} unconditionally,
	\end{enumerate}
	where $c_3=c_3(c)>0$ and $c_4=c_4(c)>0$ are constants depending on $c$, as the number $x \to \infty$.
\end{thm}

Advanced, and recent works on these exponential sums with multiplicative coefficients, and the more general exponential sums
\begin{equation}\label{S2222FN.100}
	\sum_{n \leq x} f(n)e^{i 2 \pi  \alpha n}
\end{equation}
where $f:\N\longrightarrow \C$ is a function, 
are covered in \cite{MV1977}, \cite{HS1987}, \cite{BH1991}, \cite{MS2002}, et alii.

%SSSSSSSSSSSSSSSSSSSSSSSSSSSSSSSSSSSSSSSSSSSSSSSS
%SSSSSSSSSSSSSSSSSSSSSSSSSSSSSSSSSSSSSSSSSSSSSSSS
%SSSSSSSSSSSSSSSSSSSSSSSSSSSSSSSSSSSSSSSSSSSSSSSS
\subsection{Average Orders of Liouville Functions}\label{S2222LN}
\begin{thm} \label{thm2222LN.500} If $\lambda: \N\longrightarrow \{-1,1\}$ is the Liouville function, then, for any large number $x\geq1$, the following statements are true.
	\begin{enumerate} [font=\normalfont, label=(\roman*)]
		\item $\displaystyle \sum_{n \leq x} \lambda(n)=O \left (xe^{-c\sqrt{\log x}}\right )$, \tabto{8cm} unconditionally,
		\item $\displaystyle \sum_{n\leq x}\frac{\lambda(n)}{n}=O\left( e^{-c\sqrt{\log x}} \right ), $ \tabto{8cm} unconditionally,
	\end{enumerate}where $c>0$ is an absolute constant.
\end{thm}

%SSSSSSSSSSSSSSSSSSSSSSSSSSSSSSSSSSSSSSSSSSSSSS
%SSSSSSSSSSSSSSSSSSSSSSSSSSSSSSSSSSSSSSSSSSSSSS
%SSSSSSSSSSSSSSSSSSSSSSSSSSSSSSSSSSSSSSSS
\subsection{Double Twisted Exponential Sums}
Various results on discrete Fourier transform are effective in producing estimates of the autocorrelation of arithmetic functions. The specific case of the Mobius function is estimated here.
\begin{thm} \label{thm3970M.900} If $x\geq 1$ is a large integer, then
\begin{enumerate} [font=\normalfont, label=(\roman*)]
	\item $\displaystyle \sum_{s \leq x,} \sum_{n \leq x} \mu(n)\mu(n+s)e^{i 2 \pi   ns/x}\ll\frac{x^2}{(\log x)^{2c}}$, \tabto{10cm} unconditionally,
	\item $\displaystyle \sum_{s \leq x,}\sum_{n \leq x} \frac{\mu(n)\mu(n+s)}{n}e^{i 2 \pi   ns/x}\ll\frac{x}{(\log x)^{2c}}, $ \tabto{10cm} unconditionally,
\end{enumerate}where $c>0$ is an absolute constant.
\end{thm}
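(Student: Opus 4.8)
The plan is to relate the double twisted exponential sum to the product of the single twisted exponential sums via the Convolution Theorem (Theorem \ref{thm8383DFT.900}), and then invoke the Davenport-type bounds from Theorem \ref{thm3970ME.300}. Concretely, for (i) we recognize that $\sum_{n<x}\mu(n)\mu(n+s)e^{i2\pi ns/x}$ is, for each fixed $s$, the evaluation at frequency $s$ of the discrete Fourier transform of the correlation function $R(s)=\sum_{n<x}\mu(n)\mu(n+s)$. By the Convolution Theorem this factors as $\hat{\mu}(-s)\hat{\mu}(s)$ up to the shift in the inner summation range, where $\hat{\mu}(s)=\sum_{1\le n<x}\mu(n)e^{i2\pi sn/x}$. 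Thus the double sum over $s$ becomes $\sum_{1\le s<x}\hat{\mu}(-s)\hat{\mu}(s)$, and each factor is bounded in absolute value by $c_1 x/(\log x)^c$ uniformly in $s$ by Theorem \ref{thm3970ME.300}(i), applied with $\alpha = s/x$.

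The subtlety to handle carefully is that the naive bound $|\hat{\mu}(-s)\hat{\mu}(s)|\le (c_1 x/(\log x)^c)^2$ summed trivially over $x$ values of $s$ would give $x^3/(\log x)^{2c}$, which is far too large. So first I would exploit the Parseval/Plancherel identity for the discrete Fourier transform: $\sum_{1\le s<x}|\hat{\mu}(s)|^2 = x\sum_{1\le n<x}|\mu(n)|^2 = x\sum_{n<x}\mu^2(n)\ll x^2$. Then I would split one factor off using its sup-norm bound and keep the other inside an $\ell^2$ sum: $\sum_s |\hat{\mu}(-s)||\hat{\mu}(s)| \le \left(\sup_s|\hat{\mu}(s)|\right)\sum_s|\hat{\mu}(s)| \le \left(\sup_s|\hat{\mu}(s)|\right)\cdot x^{1/2}\left(\sum_s|\hat{\mu}(s)|^2\right)^{1/2}$ by Cauchy-Schwarz, which yields $\ll \frac{x}{(\log x)^c}\cdot x^{1/2}\cdot x \cdot x^{-1/2}$... this needs the bookkeeping done right, but the point is that combining one Davenport factor $\ll x/(\log x)^c$ with the Parseval bound $\ll x^{3/2}$ on $\sum_s|\hat\mu(s)|$ (itself Cauchy-Schwarz from $\sum_s|\hat\mu(s)|^2\ll x^2$) gives the target $x^2/(\log x)^{c}$; to reach the stated exponent $2c$ one simply runs Theorem \ref{thm3970ME.300}(i) with the constant $2c$ in place of $c$, legitimate since $c$ there is arbitrary. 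The small discrepancy between $\hat{g}(n,s)$ (which depends on $n$ through the shifted summation range $n+1\le m<x+n$) and a clean $\hat{\mu}(s)$ contributes an error of size $O(n)$ per term from wrapping the sum around mod $x$, and this needs to be absorbed: since $\mu$ is bounded, the wrap-around discrepancy in each inner sum is $O(s)$, summing to $O(x^2)$ overall, which is comparable to — hence harmless against — the main term only after we note it actually also admits cancellation; alternatively one works on $\Z/x\Z$ from the start so the factorization is exact.

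For part (ii), the harmonic-weighted version, the argument is parallel but cleaner: the relevant single sum is $\sum_{n<x}\frac{\mu(n)}{n}e^{i2\pi sn/x}$, which Theorem \ref{thm3970ME.300}(ii) bounds by $c_2/(\log x)^c$ uniformly in $s$, and the analogous Parseval identity for $\sum_s\left|\sum_n \frac{\mu(n)}{n}e^{i2\pi sn/x}\right|^2 = x\sum_{n<x}\frac{\mu^2(n)}{n^2} \ll x$ gives, after the same Cauchy-Schwarz split, a bound of the shape $\frac{1}{(\log x)^c}\cdot x^{1/2}\cdot x^{1/2} = \frac{x}{(\log x)^c}$, and again running with $2c$ in place of $c$ delivers the stated $x/(\log x)^{2c}$.

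The main obstacle I anticipate is not any single estimate but the precise reconciliation of the ``up to a shift'' factorization in Theorem \ref{thm8383DFT.900}: the factor $\hat{g}(n,s)=\sum_{n+1\le m<x+n}g(m)e^{i2\pi sm/x}$ genuinely depends on $n$, so the double sum does not literally separate into $\left(\sum_s\hat{\mu}(-s)\right)$ times anything. The honest fix is to pass to cyclic convolution on $\Z/x\Z$ — replacing $\mu(m)$ for $x\le m<x+n$ by $\mu(m-x)$, i.e. extending $\mu$ periodically with period $x$ — so that the DFT of the cyclic correlation factors exactly as $\hat{\mu}(-s)\hat{\mu}(s)$, control the resulting periodization error (which involves only $O(s)$ shifted terms per inner sum, each contributing $O(1)$, hence $O(x^2)$ total, with further cancellation available via Theorem \ref{thm3970ME.300} applied to the short pieces), and only then apply Parseval and the Davenport bound. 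Getting this error term to be genuinely $o(x^2/(\log x)^{2c})$ rather than merely $O(x^2)$ is the delicate point and will require a secondary application of the twisted-sum estimates to the boundary segments.
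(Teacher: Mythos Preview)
Your approach diverges from the paper's and carries a genuine gap. The paper's argument is much shorter: after recording the convolution identity $\hat f(-s)\hat g(s)=\sum_{m<x}\bigl(\sum_{n<x}\mu(n)\mu(n+m)\bigr)e^{i2\pi ms/x}$ for a \emph{fixed} frequency $s$, it simply bounds each factor by $x/(\log x)^c$ via Theorem~\ref{thm3970ME.300} and multiplies, obtaining $x^2/(\log x)^{2c}$ directly. There is no further summation over $s$, no Parseval identity, and no Cauchy--Schwarz step in the paper's proof; it treats the double sum as identified with the single product $\hat f(s)\hat g(s)$.

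You correctly sense that a product bound summed trivially over $s$ would overshoot, but your proposed remedy does not work. First, the arithmetic is off: pairing one sup-norm factor $\ll x/(\log x)^c$ with the Cauchy--Schwarz estimate $\sum_s|\hat\mu(s)|\le x^{1/2}\bigl(\sum_s|\hat\mu(s)|^2\bigr)^{1/2}\ll x^{3/2}$ gives $x^{5/2}/(\log x)^c$, not the $x^{2}/(\log x)^c$ you state. Second, and decisively, since $\mu$ is real-valued one has $\hat\mu(-s)=\overline{\hat\mu(s)}$, whence $\hat\mu(-s)\hat\mu(s)=|\hat\mu(s)|^2\ge 0$. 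Consequently
\[
\sum_{0\le s<x}\hat\mu(-s)\hat\mu(s)=\sum_{0\le s<x}|\hat\mu(s)|^2=x\sum_{1\le n<x}\mu(n)^2\sim \frac{6}{\pi^2}\,x^2
\]
by the Plancherel identity together with Theorem~\ref{thm9339MN.107}. There is no cancellation whatsoever in the $s$-sum, so no logarithmic saving is attainable along this route; your Parseval/Cauchy--Schwarz scheme cannot reach $x^2/(\log x)^{2c}$ however the bookkeeping is rearranged. The same obstruction applies verbatim to part (ii): $\sum_s\bigl|\sum_n \tfrac{\mu(n)}{n}e^{i2\pi sn/x}\bigr|^2=x\sum_{n<x}\mu(n)^2/n^2$ is a positive constant times $x$, again with no logarithmic gain.
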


\begin{proof}[\textbf{Proof}] (i) Let $x\geq1$ be a large integer, and let $s\leq  x$. The discrete Fourier transform of the functions $f(n)=\mu(n)$ and $g(n)=\mu(n+t)$ are
	\begin{equation}\label{eq3970M.910} 
	\hat{f}(s)=	\sum_{n \leq x} \mu(n)e^{i 2 \pi  ns/x}\ll\frac{x}{(\log x)^{c}},
	\end{equation}
	and 	 
	\begin{equation}\label{eq3970M.920} 
\hat{g}(s)=	\sum_{k \leq x} \mu(k)e^{i 2 \pi  ks/x}\ll\frac{x}{(\log x)^{c}},
\end{equation}
these follow from Theorem \ref{thm3970ME.300}. Rearranging the product yields,
\begin{eqnarray}\label{eq3970M.930}
	\hat{f}(-s)\cdot \hat{g}(s)&=&\sum_{n\leq x}f(n) e^{-i2\pi ns/x}\cdot \sum_{k\leq x}g(k) e^{i2\pi ks/x}\\
	&=&\sum_{n\leq x,}\sum_{k\leq x}f(n)g(k)e^{i2\pi (k-n)s/x}\nonumber\\
	&=&\sum_{m\leq x}\left( \sum_{n\leq x}f(n)g(n+m)\right) e^{i2\pi ms/x}\nonumber	.
\end{eqnarray}		
This is equivalent to the convolution theorem, see Theorem \ref{thm8383DFT.900}. Thus, it follows that
\begin{equation}\label{eq3970M.940} 
\sum_{s \leq x,}	\sum_{n \leq x} \mu(n)\mu(n+s)e^{i 2 \pi   ns/x}=\hat{f}(s)\hat{g}(s)\ll\frac{x^2}{(\log x)^{2c}} \nonumber,
\end{equation}
as claimed. The proof of (ii) is similar to the previous one.
\end{proof}

%SSSSSSSSSSSSSSSSSSSSSSSSSSSSSSSSSSSSSSSSSSS
%SSSSSSSSSSSSSSSSSSSSSSSSSSSSSSSSSSSSSSSSSSS
%SSSSSSSSSSSSSSSSSSSSSSSSSSSSSSSSSSSSSSSSSSS
%SSSSSSSSSSSSSSSSSSSSSSSSSSSSSSSSSSSSSSSSSSS
\section{Integers in Arithmetic Progressions}\label{A2002}
An effective asymptotic formula for the number of integers in arithmetic progressions is derived in Lemma \ref{lemA2002.400W}. The derivation is based on a version of the basic large sieve inequality stated below.
%NNNNNNNNNNNNNNNNNNNNNNNNNNNNNNNNNNNNNNNNNN
%NNNNNNNNNNNNNNNNNNNNNNNNNNNNNNNNNNNNNNNNNN
\begin{thm}\label{thmA2002.200W} Let $x$ be a large number and let $Q
	\leq x$. If $\{a_n:n\geq1\}$ is a sequence of real number, then
	\begin{equation}\label{eqA2002.100W}
		\sum_{q\leq Q}	q\sum_{1\leq a\leq q}\bigg |\sum_{\substack{n \leq x\\ n\equiv a \bmod q}}a_n-\frac{1}{q}\sum_{n \leq x}a_n\bigg|^2\leq Q\left(10Q+2\pi x \right) \sum_{n \leq x}|a_n|^2\nonumber.
	\end{equation}
\end{thm}
\begin{proof}[\textbf{Proof}] The essential technical details are covered in \cite[Chapter 23]{DH2000}. This inequality is discussed in \cite{GP1967} and the literature in the theory of the large sieve. 
\end{proof}
%NNNNNNNNNNNNNNNNNNNNNNNNNNNNNNNNNNNNNNNNNN
%NNNNNNNNNNNNNNNNNNNNNNNNNNNNNNNNNNNNNNNNNN
%LLLLLLLLLLLLLLLLLLLLLLLLLLLLLLLLLLLLLLLLLL
\begin{lem} \label{lemA2002.400W} If $x \geq 1$ is a large number and $1\leq a< q \leq x$, then
	\begin{equation}\label{eqA2002.400W}
		\max_{1\leq a\leq q}\bigg |\sum_{\substack{n \leq x\\ n\equiv a \bmod q}}1-\frac{1}{q}\sum_{n \leq x}1\bigg|=O\left(\frac{x}{q}e^{-c\sqrt{\log x} }\right),
	\end{equation}
	where $ c>0$ is a constant. In particular,
	\begin{equation}\label{eqA2002.405W}
		\sum_{\substack{n \leq x\\ n\equiv a \bmod q}}1=\left[\frac{x}{q}\right]+O\left(\frac{x}{q}e^{-c\sqrt{\log x} }\right).
	\end{equation}
	
\end{lem}
\begin{proof}[\textbf{Proof}] Trivially, the basic finite sum satisfies the asymptotic \begin{equation}\label{eqA2002.410W}
		\sum_{n \leq x}1=[x]= x-\{x\},
	\end{equation}
	where $[x]=x-\{x\}$ is the largest integer function, and the number of integers in any equivalent class satisfies the asymptotic formula
	\begin{equation}\label{eqA2002.415W}
		\sum_{\substack{n \leq x\\ n\equiv a \bmod q}}1	=	\frac{x}{q}+E(x).
	\end{equation}
	Let $Q=x$ and let the sequence of real numbers be $a_n=1$ for  $n\geq1$. Now suppose that the error term is of the form 
	\begin{equation}\label{eqA2002.420W}
		E(x)=E_0(x)=O\left(x^{\alpha}\right),
	\end{equation}
	where $ \alpha\in(0,1]$ is a constant. Then, the large sieve inequality, Theorem \ref{thmA2002.200W}, yields the lower bound
	\begin{eqnarray}\label{eqA2002.430W}
		\sum_{q\leq x}	q\sum_{1\leq a\leq q}\bigg |\sum_{\substack{n \leq x\\ n\equiv a \bmod q}}1-\frac{1}{q}\sum_{n \leq x}1\bigg|^2&=&\sum_{q\leq x}	q\sum_{1\leq a\leq q}\bigg |\left[\frac{x}{q}\right]+O\left(x^{\alpha}\right)-\frac{[x]}{q}\bigg|^2\nonumber\\[.2cm]
		&=&\sum_{q\leq x}	q\sum_{1\leq a\leq q}\bigg |\frac{x}{q}-\left\{\frac{x}{q}\right\}+O\left(x^{\alpha}\right)-\frac{x-\{x\}}{q}\bigg|^2\nonumber\\[.2cm]
		&\gg&\sum_{q\leq x}	q\sum_{1\leq a\leq q}\bigg |x^{\alpha}+\frac{\{x\}}{q}-\left\{\frac{x}{q}\right\}\bigg|^2\nonumber\\[.2cm]
		&\gg&\sum_{q\leq x}	q\sum_{1\leq a\leq q}\left |x^{\alpha}\right|^2\nonumber\\[.2cm]
		&\gg&x^{2\alpha}\sum_{q\leq x}q\sum_{1\leq a\leq q}1\nonumber\\[.2cm]
		&\gg&x^{2\alpha}\sum_{q\leq x}q^2\nonumber\\[.2cm]
		&\gg&	x^{3+2\alpha} .
	\end{eqnarray}
	On the other direction, it yields the upper bound
	\begin{eqnarray}\label{eqA2002.440W}
		\sum_{q\leq x}	q\sum_{1\leq a\leq q}\bigg |\sum_{\substack{n \leq x\\ n\equiv a \bmod q}}1-\frac{1}{q}\sum_{n \leq x}1\bigg|^2
		&\leq&	 Q\left(10Q+2\pi x \right) \sum_{n \leq x}|a_n|^2\\
		&\leq&	 x\left(10x+2\pi x \right) \sum_{n \leq x}|1|^2\nonumber\\
		&\ll&	 x^3\nonumber.
	\end{eqnarray}
	Clearly, the lower bound in \eqref{eqA2002.430W} contradicts the upper bound in \eqref{eqA2002.440W}. Similarly, the other possibilities for the error term
	\begin{equation}\label{eqA2002.445W}
		E_1=O\left(\frac{x}{(\log x)^c} \right)\quad \text{ and }\quad
		E_2=O\left(xe^{-c\sqrt{\log x} }\right),
	\end{equation}
	contradict large sieve inequality. Therefore, the error term is of the form
	\begin{equation}\label{eqA2002.450W}
		E(x)=O\left(\frac{x}{q}e^{-c\sqrt{\log x} }\right)=O\left(\frac{x}{q(\log x)^c} \right)=O\left(\frac{x}{q }\right),
	\end{equation}
	where $ c>0$ is a constant.
\end{proof}

%SSSSSSSSSSSSSSSSSSSSSSSSSSSSSSSSSSSSSSSSSSSSSSS
%SSSSSSSSSSSSSSSSSSSSSSSSSSSSSSSSSSSSSSSSSSSSSSS
%SSSSSSSSSSSSSSSSSSSSSSSSSSSSSSSSSSSSSSSSSSSSSSS
\section{Averages and Norms} \label{S1279MA}
%TTTTTTTTTTTTTTTTTTTTTTTTTTTTTTTTTTTTTTTTTTT
The verifications of Theorem \ref{thm5757MA.555A} and Theorem \ref{thm1279MA.555B} are similar. Nevertheless, both proofs are included. \\
\subsection{Averages of the Autocorrelation Functions}
%TTTTTTTTTTTTTTTTTTTTTTTTTTTTTTTTTTTTTTTTTTT
Given any large integer $x$, the vector of Mobius function values \begin{equation}\label{eq1279VM.200}
	\mu(1), \mu(2),\ldots, \mu(x)
\end{equation} is extended to a periodic function. This technique allows the introduction of a combination of discrete Fourier transform analysis and analytic number theory to compute an effective upper bound of the average value of the autocorrelation function.

\begin{proof}[{\bfseries Proof of Theorem {\normalfont \ref{thm5757MA.555A}}}.] Suppose that $x\geq1$ is a large integer, and consider the autocorrelation function 
	\begin{equation}\label{eq1279VM.410B}
		R(t)=\sum_{n \leq x} \mu(n) \mu(n+t),	
	\end{equation}	
	and its spectrum function
	\begin{equation}\label{eq1279VM.420B}
		\hat{R}(s)=\sum_{1\leq t\leq x}\left( \sum_{1\leq n \leq x} \mu(n) \mu(n+t)\right) e^{i2\pi st/x} .	
	\end{equation}		
	
	The discrete Fourier transform of the autocorrelation function decomposes as a product of two factors:
	\begin{eqnarray}\label{eq1279VM.430B}
		\hat{R}(s)
		&=&\sum_{n \leq x} \mu(n) e^{-i2\pi sn/x}\sum_{n+1<m<x+n} \mu(m)e^{i2\pi sm/x}\\
		&=&\hat{f}(-s)\hat{f}(n,s)\nonumber,	
	\end{eqnarray}
	see the Convolution theorem in Theorem \ref{thm8383DFT.900}. \\
	
	Inverting the discrete Fourier transform in \eqref{eq1279VM.430B}, see Definition \ref{dfn8383DFT.100}, yields
	\begin{eqnarray}\label{eq1279VM.440B}
		\sum_{1\leq n \leq x} \mu(n) \mu(n+t)&=&\frac{1}{x}	\sum_{1\leq s\leq x}\hat{R}(s)e^{-i2\pi st/x}\\
		&=&\frac{1}{x}	\sum_{1\leq s\leq x}\hat{f}(-s)\hat{f}(n,s)e^{-i2\pi st/x}\nonumber.
	\end{eqnarray}	
	Next, averaging over the parameter $t\leq x$, yields
	\begin{eqnarray}\label{eq1279VM.450B}
		\frac{1}{x}\sum_{1\leq t \leq  x,}	\sum_{1\leq n \leq x} \mu(n) \mu(n+t)&=&\frac{1}{x}	\sum_{1\leq t\leq  x}\left (\frac{1}{x}	\sum_{1\leq s\leq x}\hat{f}(-s)\hat{f}(n,s)e^{-i2\pi st/x}\right )\nonumber\\
		&=&\frac{1}{x^2}	\sum_{1\leq s\leq x}\hat{f}(-s)\hat{f}(n,s)\sum_{1\leq t\leq  x}e^{-i2\pi st/x}\nonumber\\
		&=&\frac{-1}{x^2}	\sum_{1\leq s\leq x}\hat{f}(-s)\hat{f}(n,s).
	\end{eqnarray}	
	Taking the absolute value, and applying Lemma \ref{lem1279.800} lead to the followings.
	\begin{eqnarray}\label{eq1279VM.460B}
		\left |\frac{-1}{x^2}	\sum_{1\leq s\leq x}\hat{f}(-s)\hat{f}(n,s)\right |&\leq&\frac{1}{x^2}\sum_{1\leq s\leq x}\left |-\hat{f}(-s)\hat{f}(n,s)\right |\\
		&\ll&\frac{1}{x^2}\sum_{1\leq s\leq x}\frac{x^2}{(\log x)^{c}}\nonumber\\
		&\ll&\frac{x}{(\log x)^{c}}\nonumber,	
	\end{eqnarray}	
	where $c>0$ is an arbitrary constant.
\end{proof}

\begin{lem}\label{lem1279.800} If $x$ is a large prime number, then
	\begin{equation}\label{eq4747.800}
		\left |\hat{f}(-s)\hat{f}(n,s)\right |\ll\frac{x^{2}}{(\log x)^{c}}\nonumber,	
	\end{equation}	
where $c>0$ is an arbitrary constant.
\end{lem}
\begin{proof} Consider 
	\begin{eqnarray}\label{eq1279VM.810}
		\left |\hat{f}(-s)\hat{f}(n,s)\right |
		&=&\left |\sum_{1\leq n \leq x} \mu(n) e^{-i2\pi sn/x}\sum_{n+1<m<x+n} \mu(m)e^{i2\pi sm/x}\right |\\
		&\leq&\sum_{1\leq n \leq x}\left |U_nV_n\right |\nonumber.	
	\end{eqnarray}	
	The norm of the first term is the followings.
	\begin{eqnarray}\label{eq1279VM.485B}
		\left (\sum_{1\leq n\leq x}\left |U_n\right |^2\right )^{1/2}&=&\left (\sum_{1\leq n\leq x}\left |\mu(n) e^{-i2\pi sn/x}\right |^2\right )^{1/2}\\
		&\leq&x^{1/2}\nonumber,	
	\end{eqnarray}	
	By Lemma \ref{lem1279.900}, the norm of the second term is the following.
	\begin{eqnarray}\label{eq1279VM.490B}
		\left (\sum_{1\leq n\leq x}\left |V_n\right |^2\right )^{1/2}&=&\left (\sum_{1\leq n\leq x}\left |\sum_{n+1<m<x+n} \mu(m)e^{i2\pi sm/x}\right |^2\right )^{1/2}\\
		&\ll&\left (\frac{x^3}{(\log x)^{2c}}\right )^{1/2}\nonumber\\
		&\ll&\frac{x^{3/2}}{(\log x)^{c}}\nonumber.	
	\end{eqnarray}	
	Applying the Cauchy-Schwarz inequality yields
	\begin{eqnarray}\label{eq1279VM.495B}
		\sum_{1\leq n\leq x}|U_nV_n|
		&\leq&\left (\sum_{1\leq n\leq x}|U_n|^2\right )^{1/2}\left (\sum_{1\leq n\leq x}|V_n|^2\right )^{1/2}\\
		&\ll&\left (x^{1/2}\right )	\left (\frac{x^{3/2}}{(\log x)^{c}}\right )\nonumber\\
		&\ll&\frac{x^2}{(\log x)^{c}}\nonumber,
	\end{eqnarray}	
	where $c>0$ is an arbitrary constant.
\end{proof}

\begin{lem}\label{lem1279.900} If $x$ is a large prime number, then
	\begin{equation}\label{eq1279.900}
		\sum_{1\leq n\leq x}\left |V_n\right |^2=	\sum_{1\leq n\leq x}\left |\sum_{n+1\leq m<x+n} \mu(m)e^{i2\pi sm/x}\right |^2\ll\frac{x^3}{(\log x)^{2c}}\nonumber,	
	\end{equation}	
	where $c>0$ is an arbitrary constant.
\end{lem}
\begin{proof}Rewrite the finite sum in the form
	\begin{eqnarray}\label{eq4747.910}
		\sum_{1\leq n\leq x}\left |V_n\right |^2&=&\sum_{1\leq n\leq x}\left |\sum_{n+1\leq m<x+n} \mu(m)e^{i2\pi sm/x}\right |^2\\
		&=&\sum_{1\leq n\leq x}\left |\sum_{1\leq m<x+n} \mu(m)e^{i2\pi sm/x}-\sum_{1\leq m\leq n+1} \mu(m)e^{i2\pi sm/x}\right |^2\nonumber\\
		&\leq &\sum_{1\leq n\leq x}\left |\left |\sum_{1\leq m\leq x+n} \mu(m)e^{i2\pi sm/x}\right |+\left |\sum_{1\leq m\leq n+1} \mu(m)e^{i2\pi sm/x}\right |\right |^2\nonumber.	
	\end{eqnarray}	
	Expanding the last expression yields
	\begin{eqnarray}\label{eq4747.920}
		\sum_{1\leq n\leq x}\left |V_n\right |^2
		&\leq &\sum_{1\leq n\leq x}\left |\sum_{1\leq m<x+n} \mu(m)e^{i2\pi sm/x}\right |^2\\
		&&\hskip .5 in +2\sum_{1\leq n\leq x}\left |\sum_{1\leq m<x+n} \mu(m)e^{i2\pi sm/x}\right |\left |\sum_{1\leq m\leq n+1} \mu(m)e^{i2\pi sm/x}\right |\nonumber\\
		&&\hskip 1 in+\sum_{1\leq n\leq x}
		\left |\sum_{1\leq m\leq n+1} \mu(m)e^{i2\pi sm/x}\right |^2\nonumber.	
	\end{eqnarray}	
	Let $\alpha=s/x$. Applying Theorem \ref{thm3970ME.300} to the twisted exponential sums, yield
	\begin{eqnarray}\label{eq4747.930}
		\sum_{1\leq n\leq x}\left |V_n\right |^2
		&\ll&\sum_{1\leq n\leq x}\left |\frac{x+n}{(\log (x+n))^c}\right |^2+    2\sum_{1\leq n\leq x}
		\left |\frac{x+n}{(\log (x+n))^c}\right |\left |\frac{n+1}{(\log (n+1))^c}\right |\nonumber\\
		&&\hskip 2.5 in +\sum_{1\leq n\leq x}\left |\frac{n+1}{(\log (n+1))^c}\right |^2\nonumber\\
		&\ll&\frac{4x^2}{(\log x)^{2c}}\sum_{1\leq n\leq x}1+    \frac{8x^2}{(\log x)^c}\sum_{1\leq n\leq x}\frac{1}{(\log n)^c}+4x^2\sum_{1\leq n\leq x}\frac{1}{(\log n)^{2c}}\nonumber\\
		&\ll&	\frac{x^3}{(\log x)^{2c}},	
	\end{eqnarray}
	as claimed.
\end{proof}		

The result in Theorem \ref{thm5757MA.555A} generalizes very easily to the following.
\begin{cor} \label{cor1279MN.121} Let $\mu:\mathbb{N} \longrightarrow \{-1,0,1\}$ be the Mobius function, and let $x>1$ be a large integer. If $a_0<a_1<\cdots<a_{k-1}<x$ is an integer $k$-tuple then, 
	\begin{equation} \label{eq1279MN.121B}
		\frac{1}{x^k}\sum_{a_0<x}\cdot \sum_{a_{k-1}<x}\sum_{n \leq x} \mu(n+a_0) \mu(n+a_1)\cdots \mu(n+a_{k-1}) =O\left( \frac{x}{(\log x)^{c}}\right)\nonumber, 
	\end{equation}	
	where $c>0$ is an arbitrary constant.
\end{cor}	
\begin{proof}Let $U_n=\mu(n+a_0) \mu(n+a_1)\cdots \mu(n+a_{k-2})e^{-i2\pi sn/x}$, see \eqref{eq1279VM.485B}, and let $t=a_{k-1}$. To complete the proof, proceeds as in the proof of the previous Lemma.
\end{proof}
\begin{thm} \label{thm1279MA.555B} Let $\mu:\mathbb{N} \longrightarrow \{-1,0,1\}$ be the Mobius function. If $x\geq1$ is a large integer, then the harmonic weighted average satisfies 
	\begin{equation} \label{eq1279VM.400A}
		\frac{1}{x}\sum_{1\leq t\leq x}	\frac{1}{t}	\sum_{n \leq x} \mu(n) \mu(n+t)=O\left( \frac{x}{(\log x)^{c}}\right)\nonumber, 
	\end{equation}	
	where  $c>0$ is an arbitrary constant.
\end{thm}	

 \begin{proof}[{\bfseries Proof{\normalfont :}}] Suppose that $x\geq1$ is a large integer, and consider the autocorrelation function 
 	\begin{equation}\label{eq1279VM.410C}
 		R(t)=\sum_{n \leq x} \mu(n) \mu(n+t),	
 	\end{equation}	
 	and its spectrum function
 	\begin{equation}\label{eq1279VM.420C}
 		\hat{R}(s)=\sum_{t\leq x}\left( \sum_{n \leq x} \mu(n) \mu(n+t)\right) e^{i2\pi st/x} .	
 	\end{equation}		
 	
 	The discrete Fourier transform of the crosscorrelation function decomposes as a product of two factors:
 	\begin{eqnarray}\label{eq1279VM.430C}
 		\hat{R}(s)&=&\sum_{t\leq x,} \sum_{n \leq x} \mu(n) \mu(n+t)e^{i2\pi st/x} \\
 		&=&\sum_{m\leq x,} \sum_{n \leq x} \mu(n) \mu(m) e^{i2\pi s(m-n)/x}\nonumber\\
 		&=&\sum_{m\leq x} \mu(m)e^{i2\pi sm/x}\sum_{n \leq x} \mu(n) e^{-i2\pi sn/x}\nonumber\\
 		&=&\hat{f}(s)\hat{f}(-s)\nonumber	.	
 	\end{eqnarray}
 	This result is well known as the convolution theorem, see Theorem \ref{thm8383DFT.900}. The first factor has a well known asymptotic formula
 	\begin{equation}\label{eq1279VM.440C}
 		\hat{f}(s)=\sum_{n \leq x} \mu(n)e^{i2\pi ns/x} \ll \frac{x}{(\log x)^{c}},	
 	\end{equation}
 	set $\alpha=s/x$ in Theorem \ref{thm3970ME.300}, and the second factor has the upper bound  
 	\begin{equation}\label{eq1279VM.450C}
 		\hat{f}(-s)=\sum_{n \leq x} \mu(n) e^{-i2\pi ns/x} \ll \frac{x}{(\log x)^{c}}.	
 	\end{equation}	
 	Therefore, the spectrum function has the upper bound  
 	\begin{equation}\label{eq1279VM.460C}
 		\hat{R}(s)=\hat{f}(s)\hat{f}(-s)\ll \frac{x^2}{(\log x)^{c}}.	
 	\end{equation}	
 	Inverting the discrete Fourier transform in \eqref{eq1279VM.430C}, see Definition \ref{dfn8383DFT.100}, yields
 	\begin{eqnarray}\label{eq1279VM.470C}
 		\sum_{n \leq x} \mu(n) \mu(n+t)&=&\frac{1}{x}	\sum_{s\leq x}\hat{R}(s)e^{-i2\pi st/x}\\
 		&=&\frac{1}{x}	\sum_{s\leq x}\hat{f}(s)\hat{f}(-s)e^{-i2\pi st/x}\nonumber.
 	\end{eqnarray}	
 	Next, the harmonic average over the parameter $t\leq z$, yields
 	\begin{eqnarray}\label{eq1279VM.480C}
 		\frac{1}{x}\sum_{1\leq t \leq  x}\frac{1}{t}	\sum_{n \leq x} \mu(n) \mu(n+t)&=&\frac{1}{x}	\sum_{1\leq t\leq  x}\frac{1}{t}\left (\frac{1}{x}\sum_{s\leq x}\hat{R}(s)e^{-i2\pi st/x}\right )\\
 		&=&\frac{1}{x^2}	\sum_{s\leq x}\hat{f}(s)\hat{f}(-s)\sum_{1\leq t\leq  x}\frac{e^{-i2\pi st/x}}{t}\nonumber\\
 		&=&\frac{1}{x^2}	\sum_{s\leq x}U_sV_s\nonumber.
 	\end{eqnarray}	
 	The norms of the terms $U_s$ and $V_s$ are the followings.
 	\begin{eqnarray}\label{eq1279VM.485C}
 		U_s&=&\left (\sum_{s\leq x}\left |\hat{f}(s)\hat{f}(-s)\right |^2\right )^{1/2}\\
 		&\ll&\left (\sum_{s\leq x}\left |\frac{x^2}{(\log x)^{2c_0}}\right |^2\right )^{1/2}\nonumber\\
 		&\ll&\frac{x^{5/2}}{(\log x)^{2c_0}}\nonumber,	
 	\end{eqnarray}	
 	and
 	\begin{eqnarray}\label{eq1279VM.490C}
 		V_s&=&\left (\sum_{s\leq x}\left |\sum_{1\leq t\leq  x}\frac{e^{-i2\pi st/x}}{t}\right |^2\right )^{1/2}\\
 		&\leq&\left (\sum_{s\leq x}\left |2\log x\right |^2\right )^{1/2}\nonumber\\
 		&\leq&2x^{1/2}\log x\nonumber.	
 	\end{eqnarray}	
 	Applying the Cauchy-Schwarz inequality yield
 	\begin{eqnarray}\label{eq1279VM.495C}
 		\frac{1}{x}\sum_{1\leq t \leq  x}\frac{1}{t}	\sum_{n \leq x} \mu(n) \mu(n+t)&=&		\frac{1}{x^2}	\sum_{s\leq x}U_sV_s\\
 		&\leq&\frac{1}{x^2}	\left (\sum_{s\leq x}|U_s|^2\right )^{1/2}\left (\sum_{s\leq x}|V_s|^2\right )^{1/2}\nonumber\\
 		&\ll&\frac{1}{x^2}\left (\frac{x^{5/2}}{(\log x)^{2c_0}}\right )\left (2x^{1/2}\log x\right )	\nonumber\\
 		&\ll&\frac{x}{(\log x)^{c}}\nonumber,
 	\end{eqnarray}	
 	where $c=2c_0-1>0$ is an arbitrary constant.
 \end{proof}

%SSSSSSSSSSSSSSSSSSSSSSSSSSSSSSSSSSSSSSSSSSSSSSS
%SSSSSSSSSSSSSSSSSSSSSSSSSSSSSSSSSSSSSSSSSSSSSSS
%SSSSSSSSSSSSSSSSSSSSSSSSSSSSSSSSSSSSSSSSSSSSSSS
\subsection{Mean Values of the Autocorrelation Functions } 
The norm of the exponential sum $S(\alpha,x) =	\sum_{n \leq x} \mu(n) \mu(n+t)e^{i2\pi n\alpha}$ has the expanded form 
\begin{equation} \label{eq1279VM.410I}
\left |S(\alpha,x) \right |^2 =	\sum_{n \leq x} \mu^2(n) \mu^2(n+t)+\sum_{\substack{m,n \leq x\\m\ne n}} \mu(m) \mu(m+t)\mu(n) \mu(n+t)e^{i2\pi n\alpha} . 
\end{equation}	
\begin{thm} \label{thm5757MA.555I} Let $\lambda:\mathbb{N} \longrightarrow \{-1,1\}$ be the Liouville function, and let $\mu:\mathbb{N} \longrightarrow \{-1,0,1\}$ be the Mobius function. If $x\geq1$ is a large integer, and $t\leq x$ is an integer, then 
	\begin{equation} \label{eq1279VM.400LI}
		\int_0^1 \bigg |\sum_{n \leq x} \lambda(n) \lambda(n+t)e^{i2\pi n\alpha} \bigg |^2d\alpha=x+O\left(1\right)\nonumber, 
	\end{equation}	
	and
	\begin{equation} \label{eq1279VM.400I}
		\int_0^1 \bigg |\sum_{n \leq x} \mu(n) \mu(n+t)e^{i2\pi n\alpha} \bigg |^2d\alpha=s_0x+O\left( x^{2/3}\right)\nonumber, 
	\end{equation}	
	where $s_0>0$ is a constant.
\end{thm}	
\begin{proof}[\textbf{Proof}] Integrating the norm \eqref {eq1279VM.410I} of the exponential sum $S(\alpha,x)$ leads to the followings.
\begin{eqnarray} \label{eq1279VM.420I}
\int_0^1 \bigg |S(\alpha,x) \bigg |^2d\alpha &=&	\int_0^1 \bigg |\sum_{n \leq x} \mu(n) \mu(n+t)e^{i2\pi n \alpha} \bigg |^2d\alpha \\ &=&\int_0^1\sum_{n \leq x} \mu^2(n) \mu^2(n+t)d\alpha\nonumber \\
&& \hskip .5 in +\int_0^1 \sum_{\substack{m,n \leq x\\m\ne n}} \mu(m) \mu(m+t)\mu(n) \mu(n+t)e^{i2\pi (m-n) \alpha} d\alpha\nonumber\\
	&=&s_0x+O\left( x^{2/3}\right)\nonumber, 
\end{eqnarray}	
where $s_0>0$ is a constant, see Theorem  \ref{thm8009MN.200}.
 \end{proof}

%SSSSSSSSSSSSSSSSSSSSSSSSSSSSSSSSSSSSSSSSSSSSSSS
%SSSSSSSSSSSSSSSSSSSSSSSSSSSSSSSSSSSSSSSSSSSSSSS
%SSSSSSSSSSSSSSSSSSSSSSSSSSSSSSSSSSSSSSSSSSSSSSS
\subsection{Norms of the Autocorrelation Functions }\label{S8191LM}
The earliest computation of the norm of an exponential sum seems to be the norm $\left|S(a) \right |=\sqrt{p}$ of the Gauss sum $S(a)=\sum_{n \leq p} e^{i2\pi an^2/p}$. Similar techniques are applicable to other finite sums. A case of interest in current research in analytic number theory is demonstrated here.
\begin{thm} \label{thm8191M.900} Let $\mu:\mathbb{N} \longrightarrow \{-1,0,1\}$ be the Mobius function. If $x\geq1$ is a large integer, then the norm of the autocorrelation function $	R(t)=\sum_{n \leq x} \mu(n) \mu(n+t)$ satisfies 
\begin{equation} \label{eq8191M.900}
	\left |R(t) \right | \ll\frac{x}{(\log x)^c} \nonumber, 
	\end{equation}	
where $c>0$ is an arbitrary constant.
\end{thm}	
\begin{proof}[{\bfseries Proof{\normalfont :}}] Expanding the norm of the autocorrelation yield the expression 
\begin{eqnarray} \label{eq8191M.910}
	\left |R(t) \right |^2 &=&	\sum_{m \leq x} \mu(m) \mu(m+t)\sum_{n \leq x} \mu(n) \mu(n+t)\\
&=&	\sum_{n \leq x} \mu^2(n) \mu^2(n+t)+\sum_{\substack{m \leq x\\m\ne n}}  \mu(m) \mu(m+t)\sum_{n \leq x}\mu(n) \mu(n+t) \nonumber.
\end{eqnarray}	

The first sum has a well known asymptotic formula, see Theorem \ref{thm8009MN.200}, and the double sum is estimated in Lemma \ref{lem8191M.800}. Summing these evaluation and estimate returns 

\begin{eqnarray} \label{eq8191M.930}
\left |R(t) \right |^2 &=&	\sum_{m \leq x} \mu(m) \mu(m+t)\sum_{n \leq x} \mu(n) \mu(n+t)\\
&=&	s_0x+O\left (x^{2/3}\right )+O\left (\frac{x^{2}}{(\log x)^c}\right ) \nonumber\\
&=&	O\left (\frac{x^{2}}{(\log x)^c}\right ) \nonumber, 
\end{eqnarray}	
where $t\ne$ is an arbitrary constant. In particular, the absolute value of the autocorrelation function has the upper bound $\left|\sum_{n \leq x} \mu(n) \mu(n+t) \right |\ll x(\log x)^{-c/2}$.
 \end{proof}

\begin{lem}\label{lem8191M.800} If $x$ is a large number, and $t\in [1,x-1]$ is a fixed integer, then
\begin{equation}\label{eq8191M.800}
\sum_{\substack{m \leq x\\m\ne n}}  \mu(m) \mu(m+t)\sum_{n \leq x}\mu(n) \mu(n+t) =O\left(\frac{x^2}{(\log x)^{c}}  \right) \nonumber,
\end{equation}
where $c>0$ is an arbitrary constant.
\end{lem}
\begin{proof}Consider the map $(m,n)\longrightarrow (n+t,n)$, where $m\in[1,x-1]$, and $t\ne0$ is a fixed integer. Replacing this assignment in the double sum yields
	\begin{eqnarray} \label{eq8191M.820}
		D(x)&=&\sum_{\substack{m \leq x\\m\ne n}}  \mu(m) \mu(m+t)\sum_{n \leq x}\mu(n) \mu(n+t) \\
		&=&\sum_{\substack{m \leq x\\n+t=m\ne n}}  \mu(n+t) \mu(n+2t)\sum_{n \leq x}\mu(n) \mu(n+t)\nonumber\\
		&=&	\sum_{\substack{m \leq x\\n+t=m\ne n}} \sum_{n \leq x} \mu^2(n+t) \mu(n+2t)\mu(n)  \nonumber,
	\end{eqnarray}	
	since $t\ne0$. These are evaluations along the line $m=n+t$ on the $m$-$n$ plane. Since these lines are above and below the line $m=n$, the evaluations are double.\\
	
Thus, taking absolute value and applying Theorem \ref{thm8009MN.350} return
	\begin{eqnarray} \label{eq8191M.830}
		|D(x)|&\leq&2\sum_{\substack{m \leq x\\n+t=m\ne n}} \bigg | \sum_{n \leq x} \mu^2(n+t) \mu(n+2t) \mu(n) \bigg | \\
		&\ll&\frac{x}{(\log x)^{c}}\sum_{\substack{m \leq x\\n+t=m\ne n}}1  \nonumber\\
		&\ll&	\frac{x^2}{(\log x)^{c}}  \nonumber,
	\end{eqnarray}	
	where $c>0$ is an arbitrary constant.
\end{proof}

%SSSSSSSSSSSSSSSSSSSSSSSSSSSSSSSSSSSSSSSSSS
%SSSSSSSSSSSSSSSSSSSSSSSSSSSSSSSSSSSSSSSSSS
%SSSSSSSSSSSSSSSSSSSSSSSSSSSSSSSSSSSSSSSSSS
\section{Logarithm Averages and Arithmetic Averages} \label{S7766N}
The connection between the logarithm average
\begin{equation} \label{eq7766N.050}
	\sum_{n \leq x} \frac{f(n) }{n} 
\end{equation}
and the arithmetic average 
\begin{equation} \label{eq7766N.060}
	\sum_{n \leq x} f(n) 
\end{equation}
of an arithmetic function $f: \N \longrightarrow \C$ is important in partial summations. The required error term to compute the arithmetic average \eqref{eq7766N.060} directly from the logarithm average \eqref{eq7766N.050} is explained in \cite[Section 2.12]{HA2013}, see also \cite[Exercise 2.12]{HA2013}. \\

The current estimate of the logarithmic average order of the autocorrelation of the Liouville function $\lambda$ has the asymptotic formula
\begin{equation} \label{eq7766N.100}
	\sum_{n \leq x} \frac{\lambda(n) \lambda(n+t)}{n} =O \left (\frac{\log x}{\sqrt{\log \log x} } \right ),
\end{equation} 
where $t \ne0$ is a fixed parameter, in \cite[Corollary 2]{HH2022}. This improves the estimate $O((\log x)(\log \log \log x)^{-c})$, where $c>0$ is a constant, described in \cite[p. 5]{TT2015}.  

\begin{lem}\label{lem7766.400} Let $t\ne0$ be a small integer, and let $x\geq1$ be a large number. If the logarithm average $A(x)=\sum_{n \leq x} \lambda(n) \lambda(n+t)n^{-1}=O(\log x)(\log\log x)^{-1/2}$, then the arithmetic average 
	\begin{equation}\label{7766.400}
		\sum_{n \leq x} \lambda(n) \lambda(n+t)\leq \frac{x}{(\log \log x)^{1/2-\varepsilon}}\nonumber,
	\end{equation}
	where $\varepsilon>0$ is a small number. 
\end{lem}
\begin{proof} Assume $B(z)=\sum_{n \leq x} \lambda(n) \lambda(n+t)\geq x(\log \log z)^{-1/2+\varepsilon}$. Then,
	\begin{eqnarray}\label{eq7766.410}
		\frac{\log x}{(\log \log x)^{1/2}}&\gg&\sum_{n \leq x} \frac{\lambda(n) \lambda(n+t)}{n}\\
		&=&\int_1^x \frac{1}{z} \,dB(z)\nonumber \\
		&=&\frac{B(x)}{x}+\int_1^x \frac{B(z)}{z^2}dz\nonumber	.
	\end{eqnarray}
	Since the integral \begin{equation}\label{eq7766.420}
		\int_1^x \frac{B(z)}{z^2}dz\geq	\int_2^x\frac{1}{z(\log \log z)^{1/2-\varepsilon}} dz\gg \frac{\log x}{(\log \log x)^{1/2-\varepsilon}},
	\end{equation}
	for sufficiently large $x\geq1$, the assumption is false. Hence, it implies that $B(x)\leq  x(\log \log x)^{-1/2+\varepsilon}$.
\end{proof}

%SSSSSSSSSSSSSSSSSSSSSSSSSSSSSSSSSSSSSSSSSSSSSSS
%SSSSSSSSSSSSSSSSSSSSSSSSSSSSSSSSSSSSSSSSSSSSSSS
%SSSSSSSSSSSSSSSSSSSSSSSSSSSSSSSSSSSSSSSSSSSSSSS
\section{Double Autocorrelation Functions} \label{S4747MN}
The logarithm average in \eqref{eq7766N.100} of the autocorrelation function implies a nontrivial arithmetic average order.

\begin{thm} \label{thm4742MN.321a} Let $x$ be a large prime number, and let $\mu:\mathbb{N} \longrightarrow \{-1,0,1\}$ be the Mobius function. If $t\ne0$ is an integer, then, 
	\begin{equation} \label{eq4742MN.321a}
		\sum_{n \leq x} \mu(n) \mu(n+t) =O\left( \frac{x}{(\log \log x)^{1/2-\varepsilon}}\right)\nonumber, 
	\end{equation}	\nonumber
where $\varepsilon>0$ is a small number.
\end{thm}	

\begin{proof} This is an application of Lemma \ref{lem7766.400}.
\end{proof}
Harmonic analysis techniques are employed here to estimate the average order of the autocorrelation of the Mobius function. Two distinct proofs are developed within. 
%SSSSSSSSSSSSSSSSSSSSSSSSSSSSSSSSSSSSSSSSSSSSSSS
%SSSSSSSSSSSSSSSSSSSSSSSSSSSSSSSSSSSSSSSSSSSSSSS
%SSSSSSSSSSSSSSSSSSSSSSSSSSSSSSSSSSSSSSSSSSSSSSS
\subsection{Mobius Autocorrelation Function I} 
The pivotal innovation here is the new application of the standard techniques in Fourier analysis to the \textit{periodic version} of the Chowla conjecture. \\

Consider a vector of Mobius function values extended to a periodic function
\begin{equation}\label{eq4747VM.210C}
	\mu(1), \mu(2),\ldots, \mu(x),\mu(1), \mu(2),\ldots, \mu(x),\mu(1), \mu(2),\ldots, \mu(x),\ldots,
\end{equation}

\begin{thm} \label{thm4742MN.321} Let $x$ be a large prime number, and let $\mu:\mathbb{N} \longrightarrow \{-1,0,1\}$ be the periodic Mobius function. If $t\ne0$ is an integer, then, 
	\begin{equation} \label{eq4742MN.321}
		\sum_{n \leq x} \mu(n) \mu(n+t) =O\left( \frac{x}{(\log x)^{2c}}\right)\nonumber, 
	\end{equation}	
	where $c>0$ is an arbitrary constant.
\end{thm}	 

\begin{proof}[{\bfseries Proof}] Consider the autocorrelation function 
\begin{equation}\label{eq4747MN.400}
R(t)=\sum_{n \leq x} \mu(n) \mu(n+t),	
\end{equation}	
and its spectrum function
\begin{equation}\label{eq4747MN.410}
	\hat{R}(s)=\sum_{t\leq x}\left( \sum_{n \leq x} \mu(n) \mu(n+t)\right) e^{i2\pi st/x} .	
\end{equation}		

The discrete Fourier transform of the autocorrelation function decomposes as a product of two factors:
\begin{eqnarray}\label{eq4747MN.420}
	\hat{R}(s)&=&\sum_{t\leq x} \sum_{n \leq x} \mu(n) \mu(n+t) e^{i2\pi st/x} \\
&=&\sum_{m\leq x} \sum_{n \leq x} \mu(n) \mu(m) e^{i2\pi s(m-n)/x}\nonumber\\
&=&\sum_{m\leq x} \mu(m)e^{i2\pi sm/x}\sum_{n \leq x} \mu(n) e^{-i2\pi sn/x}\nonumber\\
&=&\hat{f}(s)\hat{f}(-s)\nonumber	.	
\end{eqnarray}
This result is well known as the circular convolution theorem, see Theorem \ref{thm8383DFT.900C}. Each factor has a well known asymptotic formula
\begin{equation}\label{eq4747MN.430}
	\hat{f}(s)=\sum_{n \leq x} \mu(n)e^{i2\pi ns/x} \leq \frac{c_1x}{(\log x)^{c}},	
\end{equation}
where $c_1>0$ is a constant, see Theorem \ref{thm3970ME.300}. Therefore, the spectrum function has the upper bound  
\begin{equation}\label{eq4747MN.440}
	\hat{R}(s)=\hat{f}(s)\hat{f}(-s)\leq \frac{c_3x^2}{(\log x)^{2c}},	
\end{equation}	
where $c_3>0$ is a constant. Inverting the discrete Fourier transform in \eqref{eq4747MN.420}, see Definition \ref{dfn8383DFT.100}, yields
\begin{eqnarray}\label{eq4747MN.450}
\sum_{n \leq x} \mu(n) \mu(n+t)&=&\frac{1}{x}	\sum_{s\leq x}\hat{R}(s)e^{-i2\pi st/x}\\
&\leq&\frac{c_2}{x}	\sum_{s\leq x}\frac{x^2}{(\log x)^{2c}}e^{-i2\pi st/x}\nonumber.
\end{eqnarray}	
Taking absolute value yields
\begin{eqnarray}\label{eq4747MN.460}
	\left |	\sum_{n \leq x} \mu(n)\mu(n+t)\right |
	&\leq&\frac{c_1x}{(\log x)^{c}}	\left |\sum_{s\leq x}e^{-i2\pi st/x}\right |\\
	&\ll&\frac{x}{(\log x)^{c}}\nonumber,	
\end{eqnarray}	
where $c_2=c_1^2$, and $c>0$ are constants. Quod erat demonstrandum.
\end{proof}

\begin{rmk}{\normalfont The proof of Theorem \ref{thm5757MN.121} was done using cyclic convolution because of its simplicity. A proof based on the linear convolution is done by extending the function from the interval $[1,x]$ to $[1,2x]$ by zero-padding. Specifically, let $f_0,g_0: \N \longrightarrow\C$ be a pair of functions, and let $x\geq 1$ be an integer. Set 
	\begin{equation}\label{eq8383DFT.900}
		f(n)=
		\begin{cases}
			f_0(n)&\text{ if }n<x,\\
			0&\text{ if } x<n\leq 2x.
		\end{cases}	
	\end{equation}	
	Then, discrete Fourier transform of the linear correlation function of $f(t)$ and $g(t)$ is equal to the product of the individual discrete Fourier transforms. The proof based on the cyclic convolution uses discrete transforms of length $x$, and proof based on the linear convolution uses discrete transforms of length $2x$. These proofs have the same technical steps, but the asymptotic formulas have slightly different implied constants.
}
\end{rmk}

%SSSSSSSSSSSSSSSSSSSSSSSSSSSSSSSSSSSSSSSSSSSSSSS
%SSSSSSSSSSSSSSSSSSSSSSSSSSSSSSSSSSSSSSSSSSSSSSS
%SSSSSSSSSSSSSSSSSSSSSSSSSSSSSSSSSSSSSSSSSSSSSSS
\subsection{Mobius Autocorrelation Function II} \label{SP1199}
The proof explored in this section is based on other technique from harmonic analysis. This technique uses the special value of the Ramanujan sum
	\begin{equation}\label{eqP1199.800a}
	c_n(1)	=\sum_{\substack{1\leq u<n\\\gcd(n,u)=1}}e^{i2 \pi u/n}
	=\mu(n),
\end{equation}
see \cite[Section 8.3]{AT1976}, and elementary analytic methods to derive an asymptotic formula for the Mobius autocorrelation function 
\begin{equation}\label{eqP1199.800b}
	R(t)=\sum_{n \leq x} \mu(n) \mu(n+t) .
\end{equation}

\begin{thm} \label{thmP1199.800} Let $x\geq1$ be a large prime number, and let $\mu:\mathbb{N} \longrightarrow \{-1,0,1\}$ be the Mobius function. If $t\ne0$ is an integer, then, 
	\begin{equation} \label{eqP1199.810a}
		\sum_{n \leq x} \mu(n) \mu(n+t) =O\bigg(e^{-c\sqrt{\log x}}\bigg), 
	\end{equation}	\nonumber
where $c>0$ is a constant.
\end{thm}	

\begin{proof} [\textbf{Proof}] Without loss in generality, assume that $x\geq1$ is a large integer, and let $t=1$. Replace the identity \eqref{eqP1199.800a} into \eqref{eqP1199.800b} twice, and continue to substitute the characteristic function
	\begin{equation}\label{eqP1199.810b}
		\sum_{\substack{d\mid a\\d\mid n}}\mu(d)=
		\begin{cases}
			1&	\text{ if }\gcd(a,n)=1,\\
			0&	\text{ if }\gcd(a,n)\ne1,
		\end{cases}
	\end{equation}	
	of relatively prime numbers. These substitutions transform \eqref{eqP1199.800b} into an exponential autocorrelation function:
	\begin{eqnarray}\label{eqP1199.810c}
		\sum_{n\leq x}\mu(n)\mu(n+1)&=& 	\sum_{n\leq x,}\sum_{\substack{1\leq u<n\\\gcd(n,u)=1}}e^{i2 \pi u/n}\times\sum_{\substack{1\leq v<n+1\\\gcd(n+1,v)=1}}e^{i2 \pi v/n+1}\\[.2cm]
		&=& 	\sum_{n\leq x,}\sum_{1\leq u<n}e^{i2 \pi u/n}\sum_{\substack{d_1\mid u\\d_1\mid n}}\mu(d_1)\times\sum_{1\leq v<n+1}e^{i2 \pi v/n+1}\sum_{\substack{d_2\mid v\\d_2\mid n+1}}\mu(d_2)\nonumber.
	\end{eqnarray}
	Next, switch the order of summations
	\begin{eqnarray}\label{eqP1199.810d}
		\sum_{n\leq x}\mu(n)\mu(n+1)&=& 	\sum_{n\leq x,}\sum_{d_1\mid n}\mu(d_1)\sum_{\substack{1\leq u<n\\d_1\mid u}}e^{i2 \pi u/n}\times \sum_{d_2\mid n}\mu(d_2)\sum_{\substack{1\leq v<n+1\\d_2\mid v}}e^{i2 \pi v/n+1}\\[.2cm]
		&=& \sum_{\substack{1\leq d_1< x\\1\leq d_2< x+1}}\mu(d_1)\mu(d_2)\sum_{\substack{1\leq n\leq x\\d_1\mid n,\; d_2\mid n+1}}	\sum_{\substack{1\leq u<n\\d_1\mid u}}e^{i2 \pi u/n}\sum_{\substack{1\leq v<n+1\\d_2\mid v}}e^{i2 \pi v/n+1}	\nonumber.
	\end{eqnarray}
	
	Proceed to substitute the change of variables
	\begin{align}\label{eqP1199.810e}
		u&=d_1r , &n&=d_1k;\\[.2cm]		
		v&=d_2s ,&n+1&=d_2m;
	\end{align}
	to simplify \eqref{eqP1199.810d}. Specifically, 
	\begin{eqnarray}\label{eqP1199.820a}
		\sum_{n\leq x}\mu(n)\mu(n+1)
		&=&  \sum_{\substack{1\leq d_1< x\\1\leq d_2< x+1}}\mu(d_1)\mu(d_2)\sum_{\substack{1\leq n\leq x\\d_1\mid n,\; d_2\mid n+1}}	\sum_{1\leq r<k}e^{i2 \pi r/k}\sum_{1\leq s<m}e^{i2 \pi s/m}	\nonumber\\[.2cm]
		&=& \sum_{\substack{1\leq d_1< x\\1\leq d_2< x+1}}\mu(d_1)\mu(d_2)\sum_{\substack{1\leq n\leq x\\d_1\mid n,\; d_2\mid n+1}}1 .
	\end{eqnarray}
	The last equality follows from
	\begin{equation}\label{eqP1199.820b}
		\sum_{1\leq r<k}e^{i2 \pi r/k}
		=\sum_{1\leq s<m}e^{i2 \pi s/m}
		=-1
	\end{equation}
	for any integers $k,m\geq2$. \\
	
Now, the conditions $d_1\mid n$ and $d_2\mid n+1$ imply that $\lcm(d_1,d_2)=d_1d_2$. This yields the decomposition 
	\begin{eqnarray}\label{eqP1199.820c}
		\sum_{n\leq x}\mu(n)\mu(n+1)
		&=&  \sum_{\substack{1\leq d_1< x\\1\leq d_2< x+1}}\mu(d_1)\mu(d_2)\sum_{\substack{1\leq n\leq x\\d_1\mid n,\; d_2\mid n+1}}1	\\[.2cm]
		&=&  \sum_{\substack{1\leq d_1< x\\1\leq d_2< x+1}}\mu(d_1)\mu(d_2)\left(\sum_{\substack{1\leq n\leq x\\d_1\mid n,\; d_2\mid n+1}}1-\frac{x}{d_1d_2} \right) 	\nonumber\\[.2cm]
		&&\hskip 2 in +	\sum_{\substack{1\leq d_1< x\\1\leq d_2< x+1}}\mu(d_1)\mu(d_2)\frac{x}{d_1d_2}\nonumber\\[.2cm]
		&=&  E_0(x+E_1(x) .	\nonumber
	\end{eqnarray}
The first subsum $E_0(x$ is estimated in Lemma \ref{lemP1199.450A} and the second subsum is estimated in Lemma \ref{lemP1199.450B}. Combining these estimates yields
\begin{eqnarray}\label{eqP1199.820d}
		\sum_{n\leq x}\mu(n)\mu(n+1)
		&=& E_0(x+E_1(x)	\\
		&=& O\bigg(xe^{-c_1\sqrt{\log x}}\bigg) +O\bigg(xe^{-c_2\sqrt{\log x}}\bigg) 	\nonumber\\[.2cm]
		&=&  O\bigg(xe^{-c_3\sqrt{\log x}}\bigg) \nonumber ,	
	\end{eqnarray}
where $c_0,c_1, c_2,c_3>0$ are constants. 
\end{proof}

%WWWWWWWWWWWWWWWWWWWWWWWWWWWWWWWWWWWWWW
%WWWWWWWWWWWWWWWWWWWWWWWWWWWWWWWWWWWWWW
\begin{lem} \label{lemP1199.450A} Assume that $d_1\mid n$ and $d_2\mid n+1$. If $x \geq 1$ is a large number, then
	\begin{equation} \label{eqP1199.450A}
		E_0(x)=\sum_{\substack{1\leq d_1< x\\1\leq d_2< x+1}}\mu(d_1)\mu(d_2)\left(\sum_{\substack{1\leq n\leq x\\d_1\mid n,\; d_2\mid n+1}}1-\frac{x}{d_1d_2} \right)=O\left( xe^{-c_1\sqrt{\log x}}\right)\nonumber,
	\end{equation}
	where $c_1>0$ is a constant.
\end{lem}
\begin{proof}[\textbf{Proof}] The conditions $d_1\mid n$ and $d_2\mid n+1$ imply that $\lcm(d_1,d_2)=d_1d_2$.Let $q=[d_1,d_2]=d_1d_2$. Taking absolute value and invoking Lemma \ref{lemA2002.400W} yield 
	\begin{eqnarray} \label{eqP1199.455A}
		|E_0(x)| &\leq& \sum_{\substack{1\leq d_1< x\\1\leq d_2< x+1}} \left |\sum_{\substack{1\leq n \leq x\\ d_1\mid n,\;d_2\mid n+1}}1-\frac{x}{d_1d_2} \right|\\[.2cm]
		&\ll&\sum_{\substack{1\leq d_1< x\\1\leq d_2< x+1}} \left(\frac{x}{d_1d_2}e^{-c\sqrt{\log x} } \right) \nonumber\\[.2cm]
		&\ll&xe^{-c\sqrt{\log x} }\sum_{\substack{1\leq d_1< x\\1\leq d_2< x+1}}\frac{1}{d_1d_2} \nonumber.
	\end{eqnarray}	
Estimating the finite sum and simplifying return
	\begin{eqnarray} \label{eqP1199.470A}
		E_0(x) 
		&=&O\left( x e^{-c\sqrt{\log x}} \cdot(\log x)^2\right)  \\
		&=& O\left( xe^{-c_1\sqrt{\log x}}\right)\nonumber,
	\end{eqnarray}	
	where $c,c_1>0$ are constants.
\end{proof}

%WWWWWWWWWWWWWWWWWWWWWWWWWWWWWWWWWWWWWW
%WWWWWWWWWWWWWWWWWWWWWWWWWWWWWWWWWWWWWW
\begin{lem} \label{lemP1199.450B} Assume that $d_1\mid n$ and $d_2\mid n+1$. If $x \geq 1$ is a large number, then
	\begin{equation} \label{eqP1199.450B}
		E_1(x)=\sum_{\substack{1\leq d_1< x\\1\leq d_2< x+1}}\mu(d_1)\mu(d_2)\frac{x}{d_1d_2}=O\left( xe^{-c_2\sqrt{\log x}}\right)\nonumber,
	\end{equation}
	where $c_2>0$ is a constant.
\end{lem}
\begin{proof}[\textbf{Proof}] The conditions $d_1\mid n$ and $d_2\mid n+1$ imply that $\gcd(d_1,d_2)=1$. Accordingly, $d_1< x$ and $d_2< x+1$ are relatively prime and independent variables. Thus, the last expression can be factored as
	\begin{eqnarray}\label{eqP1199.820B}
\sum_{\substack{1\leq d_1< x\\1\leq d_2< x+1}}\mu(d_1)\mu(d_2)\frac{x}{d_1d_2}
		&=& x\bigg(\sum_{1\leq d_1\leq x}\frac{\mu(d_1)}{d_1}\bigg)^2 \\
		&=& O\bigg(xe^{-c_2\sqrt{\log x}}\bigg) \nonumber ,	
	\end{eqnarray}
	where $c_2>0$ is a constant. The last line in \eqref{eqP1199.820B} uses the asymptotic estimates
	\begin{equation}\label{eqP1199.825B}
		\sum_{1\leq n\leq x}\frac{\mu(n)}	{n}=O\bigg(e^{-c\sqrt{\log x}}\bigg),
	\end{equation}
	where $c>0$ is a constant, see Theorem \ref{thm2222.500}.
\end{proof} 

The derivation of an effective asymptotic estimate for the autocorrelation function $R(t) $ is simpler for the parameter $t=1$ since the intersection of the two arithmetic progressions
\begin{equation}\label{eqP1199.830a}
	n\quad \text{ and }\quad n+1
\end{equation}
has a very simple sequence $\gcd(n,n+1)=1$ for $n\leq x$, so the finite sum
\begin{equation}\label{eqP1199.830b}
	\sum_{\substack{1\leq n\leq x\\d_1\mid n,\; d_2\mid n+1}}1
\end{equation}
it is simpler to analyze. The prime parameter $t=p$ seems to be next simple case since the intersection of the two arithmetic progressions
\begin{equation}\label{eqP1199.830c}
	n\quad \text{ and }\quad n+p\end{equation}
has a simple sequence $\gcd(n,n+p)=1,p$ for $n\leq x$, so the finite sum \eqref{eqP1199.830b} 
is also simple to analyze. On the other extreme, for a highly composite parameter $t>2$ the intersection of the two arithmetic progressions
\begin{equation}\label{eqP1199.830d}
	n\quad \text{ and }\quad n+t
\end{equation}
has a complicated sequence $\gcd(n,n+t)$ for $n\leq x$, so the finite sum \eqref{eqP1199.830b} seems to be more difficult to analyze.

%SSSSSSSSSSSSSSSSSSSSSSSSSSSSSSSSSSSSSSSSSSSSSSS
%SSSSSSSSSSSSSSSSSSSSSSSSSSSSSSSSSSSSSSSSSSSSSSS
%SSSSSSSSSSSSSSSSSSSSSSSSSSSSSSSSSSSSSSSSSSSSSSS
\section{Triple Autocorrelation Functions} \label{S4743MN}
The simple procedure of deriving the case $k=3$ from the cases $k=2$ are demonstrated here.
\begin{thm} \label{thm4743MN.321} Let $x$ be a large prime number, and let $\mu:\mathbb{N} \longrightarrow \{-1,0,1\}$ be the periodic Mobius function \eqref{eq4747VM.210C}. If $a_0<a_1<a_{2}<x$ is an integer triple, then, 
	\begin{equation} \label{eq4743MN.321}
		\sum_{n <x} \mu(n+a_0) \mu(n+a_1) \mu(n+a_{2}) =O\left( \frac{x}{(\log x)^{2c}}\right)\nonumber, 
	\end{equation}	
	where $c>0$ is an arbitrary constant.
\end{thm}	
\begin{proof}[{\bfseries Proof}] Suppose that $x\geq1$ is a large integer, and consider the autocorrelation function 
	\begin{equation}\label{eq4743MN.400}
		R(t)=\sum_{n <x} \mu(n)\mu(n+a) \mu(n+t),	
	\end{equation}	
	and its spectrum function
	\begin{equation}\label{eq4743MN.410}
		\hat{R}(s)=\sum_{t<x}\left( \sum_{n <x} \mu(n) \mu(n+a)\mu(n+t)\right) e^{i2\pi st/x} .	
	\end{equation}		
	
	The discrete Fourier transform of the autocorrelation function decomposes as a product of two factors:
	\begin{eqnarray}\label{eq4743MN.420}
		\hat{R}(s)&=&\sum_{t<x} \sum_{n <x} \mu(n) \mu(n+a)\mu(n+t) e^{i2\pi st/x} \\
		&=&\sum_{m<x} \sum_{n <x} \mu(n)\mu(n+a) \mu(m) e^{i2\pi s(m-n)/x}\nonumber\\
		&=&\sum_{n<x} \mu(n)\mu(n+a)e^{-i2\pi sn/x}\sum_{m <x} \mu(m) e^{i2\pi sm/x}\nonumber\\
		&=&\hat{f_2}(-s)\hat{f}(s)\nonumber	.	
	\end{eqnarray}
	This result is well known as the circular convolution theorem, see Theorem \ref{thm8383DFT.900C}. The first factor has a well known asymptotic formula
	\begin{equation}\label{eq4743MN.430A}
		\hat{f}(s)=\sum_{n <x} \mu(n)e^{i2\pi ns/x} \leq \frac{c_1x}{(\log x)^{c}},	
	\end{equation}
where $c_1>0$ is a constant, see Theorem \ref{thm3970ME.300}, and the second  factor has the upper bound
\begin{equation}\label{eq4743MN.430B}
	\hat{f}_2(-s)=\sum_{n <x} \mu(n)\mu(n+a)e^{-i2\pi ns/x} \leq x.	
\end{equation}	
Therefore, the spectrum function has the upper bound  
	\begin{equation}\label{eq4743MN.440}
		\hat{R}(s)=\hat{f}_2(-s)\hat{f}(s)\leq \frac{c_1x^2}{(\log x)^{c}}.	
	\end{equation}	
	Inverting the discrete Fourier transform in \eqref{eq4747MN.420}, see Definition \ref{dfn8383DFT.100}, yields
	\begin{eqnarray}\label{eq4743MN.450}
		\sum_{n <x} \mu(n)\mu(n+a) \mu(n+t)&=&\frac{1}{x}	\sum_{s<x}\hat{R}(s)e^{-i2\pi st/x}\\
	&\leq&\frac{c_1}{x}	\sum_{s<x}\frac{x^2}{(\log x)^{c}}e^{-i2\pi st/x}\nonumber.	
	\end{eqnarray}	
Taking absolute value yields
	\begin{eqnarray}\label{eq4743MN.460}
\left |	\sum_{n <x} \mu(n)\mu(n+a) \mu(n+t)\right |
	&\leq&\frac{c_1x}{(\log x)^{c}}	\left |\sum_{s<x}e^{-i2\pi st/x}\right |\\
	&\ll&\frac{x}{(\log x)^{c}}\nonumber,	
\end{eqnarray}	
	where $c>0$ is an arbitrary constant. Quod erat demonstrandum.
\end{proof}

\begin{thm} \label{thm4743LN.321} Let $x$ be a large prime number, and let $\lambda:\mathbb{N} \longrightarrow \{-1,0,1\}$ be the periodic Liouville function \eqref{eq4747VM.210C}. If $a_0<a_1<a_{2}<x$ is an integer triple, then, 
	\begin{equation} \label{eq4743LN.321}
		\sum_{n <x} \lambda(n+a_0) \lambda(n+a_1) \lambda(n+a_{2}) =O\left( \frac{x}{(\log x)^{2c}}\right)\nonumber, 
	\end{equation}	
	where $c>0$ is an arbitrary constant.
\end{thm}	
\begin{proof}[{\bfseries Proof}] The same technical steps, but replace the function $\mu$ to the function $\lambda$ in \eqref{eq4743MN.400} to \eqref{eq4743MN.450} .
\end{proof}

%SSSSSSSSSSSSSSSSSSSSSSSSSSSSSSSSSSSSSSSSSSSSSSS
%SSSSSSSSSSSSSSSSSSSSSSSSSSSSSSSSSSSSSSSSSSSSSSS
%SSSSSSSSSSSSSSSSSSSSSSSSSSSSSSSSSSSSSSSSSSSSSSS
\section{$k$-Tuple Autocorrelation Functions} \label{S4747K}
The general case of $k$-tuple autocorrelation functions are determined recursively. There are other options as proofs by inductions. The proofs follows the same pattern as the proofs for the triple autocorrelation functions in Section \ref{S4743MN}.\\

{\bfseries Theorem \ref{thm5757MN.121}}
 \textit{Let $x$ be a large prime number, and let $\mu:\mathbb{N} \longrightarrow \{-1,0,1\}$ be the periodic Mobius function \eqref{eq4747VM.210C}. If $a_0<a_1<a_{2}<\cdots<a_{k-1}<x$ is an integer $k$-tuple, then, 
	\begin{equation} \label{eq474kMN.321}
		\sum_{n <x} \mu(n+a_0) \mu(n+a_1) \cdots \mu(n+a_{k-1}) =O\left( \frac{x}{(\log x)^{2c}}\right)\nonumber, 
	\end{equation}	
	where $c>0$ is an arbitrary constant.}

\begin{proof} The same technical steps as the proof of Theorem \ref{thm5757LN.121} below, but replace the function $\lambda$ to the function $\mu$ in \eqref{eq474kLN.400} to \eqref{eq474kMN.460} .
\end{proof}

Consider a vector of Liouville function values extended to a periodic function
\begin{equation}\label{eq4747VM.230C}
	\lambda(1), \lambda(2),\ldots, \lambda(x),\lambda(1), \lambda(2),\ldots, \lambda(x),\lambda(1), \lambda(2),\ldots, \lambda(x),\ldots,
\end{equation}

{\bfseries Theorem \ref{thm5757LN.121}}  \textit{Let $x$ be a large prime number, and let $\lambda:\mathbb{N} \longrightarrow \{-1,0,1\}$ be the periodic Liouville function \eqref{eq4747VM.230C}. If $a_0<a_1<a_{2}<\cdots<a_{k-1}<x$ is an integer $k$-tuple, then, 
	\begin{equation} \label{eq474kLN.321}
		\sum_{n <x} \lambda(n+a_0) \lambda(n+a_1) \cdots \lambda(n+a_{k-1}) =O\left( \frac{x}{(\log x)^{2c}}\right)\nonumber, 
	\end{equation}	
	where $c>0$ is an arbitrary constant.}

\begin{proof}[{\bfseries Proof}] The $k$-tuple autocorrelation is determined recursively. Suppose that the $(k-1)$-tuple autocorrelation
\begin{equation} \label{eq474kLN.400}
\sum_{n <x} \lambda(n+a_0) \lambda(n+a_1) \cdots \lambda(n+a_{k-2}) e^{i2\pi nt/x}=O\left( \frac{x}{(\log x)^{c}}\right), 
	\end{equation}	
is known, compare the double autocorrelation function in Section \ref{S4747MN}, and the triple autocorrelation function in Section \ref{S4743MN} for background information. Now, consider the $k$-tuple autocorrelation function 
	\begin{equation}\label{eq474kMN.410}
		R(t)=\sum_{n <x} \lambda(n+a_1) \cdots \lambda(n+a_{k-2}) \lambda(n+t),	
	\end{equation}	
where $a_{k-1}=t$, and its spectrum function
	\begin{equation}\label{eq474kMN.420}
		\hat{R}(s)=\sum_{t<x}\left( \sum_{n <x} \lambda(n+a_1) \cdots \lambda(n+a_{k-2})\lambda(n+t)\right) e^{i2\pi st/x} ,	
	\end{equation}		
where $x\geq1$ is a large integer. The discrete Fourier transform of the autocorrelation function decomposes as a product of two factors:
	\begin{eqnarray}\label{eq474kMN.430}
		\hat{R}(s)&=&\sum_{t<x} \sum_{n <x} \lambda(n+a_0)\lambda(n+a_1) \cdots \lambda(n+a_{k-2})\lambda(n+t)e^{i2\pi st/x} \\
		&=&\sum_{m<x} \sum_{n <x} \lambda(n+a_0)\lambda(n+a_1) \cdots \lambda(n+a_{k-2})\lambda(m) e^{i2\pi s(m-n)/x}\nonumber\\
		&=&\sum_{n<x} \lambda(n+a_0)\lambda(n+a_1) \cdots \lambda(n+a_{k-2})e^{-i2\pi sn/x}\sum_{m <x} \lambda(m) e^{i2\pi sm/x}\nonumber\\
		&=&\hat{f}_{k-2}(-s)\hat{f}(s)\nonumber	.	
	\end{eqnarray}
The first factor corresponds to the $(k-1)$-tuple autocorrelation in \eqref{eq474kLN.400}, and the second factor has the upper bound
	\begin{equation}\label{eq474kMN.440}
		\hat{f}(s)=\sum_{n <x} \mu(n)e^{i2\pi ns/x} \leq \frac{c_1x}{(\log x)^{c}},	
	\end{equation}	
where $c_1>0$ is a constant, see Theorem \ref{thm3970ME.300}. Therefore, the spectrum function has the upper bound  
	\begin{equation}\label{eq474kMN.450}
\hat{R}(s)=\hat{f}_{k-2}(-s)\hat{f}(s)\leq  \frac{c_2x^2}{(\log x)^{2c}}.	
	\end{equation}	
	Inverting the discrete Fourier transform in \eqref{eq474kMN.420}, see Definition \ref{dfn8383DFT.100}, yields
	\begin{eqnarray}\label{eq474kMN.460}
		\sum_{n <x} \lambda(n+a_0)\lambda(n+a_1) \cdots \lambda(n+a_{k-1})&=&\frac{1}{x}	\sum_{s<x}\hat{R}(s)e^{-i2\pi st/x}\\
				&\leq&\frac{c_2}{x}	\sum_{s<x}\frac{x^2}{(\log x)^{2c}}e^{-i2\pi st/x}\nonumber.
	\end{eqnarray}	
Taking absolute value yields
\begin{eqnarray}\label{eq4744MN.470}
	\left |	\sum_{n <x} \lambda(n+a_0)\lambda(n+a_1) \cdots \lambda(n+a_{k-1})\right |
	&\leq&\frac{c_2x}{(\log x)^{2c}}	\left |\sum_{s<x}e^{-i2\pi st/x}\right |\\
	&\ll&\frac{x}{(\log x)^{2c}}\nonumber,	
\end{eqnarray}	
where $c_2>0$ and $c>0$ are constants. Quod erat demonstrandum.
	\end{proof}

%SSSSSSSSSSSSSSSSSSSSSSSSSSSSSSSSSSSSSSSSSSSSSSS
%SSSSSSSSSSSSSSSSSSSSSSSSSSSSSSSSSSSSSSSSSSSSSSS
%SSSSSSSSSSSSSSSSSSSSSSSSSSSSSSSSSSSSSSSSSSSSSSS
\section{Crosscorrelation Functions } \label{S4747CF}
The crosscorrelation function of the Mobius function and the Liouville function has the expected small values for any shift $t\ne0$. The proof is based on the standard techniques in discrete Fourier analysis. 

\begin{thm} \label{thm4742CF.321} Let $x>1$ be a large prime number, let $\mu:\mathbb{N} \longrightarrow \{-1,0,1\}$ be the periodic Mobius function \eqref{eq4747VM.210C}, and let $\lambda:\mathbb{N} \longrightarrow \{-1,1\}$ be the periodic Liouville function \eqref{eq4747VM.230C}. If $t\ne0$ is an integer, then, 
	\begin{equation} \label{eq4742CF.321}
		\sum_{n <x} \lambda(n) \mu(n+t) =O\left( \frac{x}{(\log x)^{2c}}\right)\nonumber, 
	\end{equation}	
	where $c>0$ is an arbitrary constant.
\end{thm}	 

\begin{proof}[{\bfseries Proof}] Without loss in generality, assume that $x\geq1$ is a large integer, and consider the autocorrelation function 
	\begin{equation}\label{eq4747CF.400}
		R(t)=\sum_{n <x} \lambda(n) \mu(n+t),	
	\end{equation}	
	and its spectrum function
	\begin{equation}\label{eq4747CF.410}
		\hat{R}(s)=\sum_{t<x}\left( \sum_{n <x} \lambda(n) \mu(n+t)\right) e^{i2\pi st/x} .	
	\end{equation}		
	
	The discrete Fourier transform of the autocorrelation function decomposes as a product of two factors:
	\begin{eqnarray}\label{eq4747CF.420}
		\hat{R}(s)&=&\sum_{t<x} \sum_{n <x} \lambda(n) \mu(n+t) e^{i2\pi st/x} \\
		&=&\sum_{m<x} \sum_{n <x} \lambda(n) \mu(m) e^{i2\pi s(m-n)/x}\nonumber\\
		&=&\sum_{m<x} \mu(m)e^{i2\pi sm/x}\sum_{n <x} \lambda(n) e^{-i2\pi sn/x}\nonumber\\
		&=&\hat{f}(s)\hat{g}(-s)\nonumber	.	
	\end{eqnarray}
	This result is well known as the circular convolution theorem, see Theorem \ref{thm8383DFT.900C}. Each factor has the well known asymptotic formulas
		\begin{equation}\label{eq4747CF.430}
		\hat{f}(s)=\sum_{n <x} \mu(n)e^{i2\pi ns/x} \leq \frac{c_1x}{(\log x)^{c}},	
	\end{equation}
and 
	\begin{equation}\label{eq4747CF.435}
		\hat{g}(-s)=\sum_{n <x} \lambda(n)e^{-i2\pi ns/x} \leq \frac{c_2x}{(\log x)^{c}},	
	\end{equation}
where $c_1,c_2>0$ are constants, see Theorem \ref{thm3970ME.300}, and Theorem \ref{thm3970LE.300}, respectively. Therefore, the spectrum function has the upper bound  
	\begin{equation}\label{eq4747CF.440}
		\hat{R}(s)=\hat{f}(s)\hat{g}(-s)\leq \frac{c_3x^2}{(\log x)^{2c}},	
	\end{equation}	
where $c_3>0$ is a constant. Inverting the discrete Fourier transform in \eqref{eq4747MN.420}, see Definition \ref{dfn8383DFT.100}, yields
	\begin{eqnarray}\label{eq4747CF.450}
		\sum_{n <x} \lambda(n) \mu(n+t)&=&\frac{1}{x}	\sum_{s<x}\hat{R}(s)e^{-i2\pi st/x}\\
		&\leq&\frac{c_3}{x}	\sum_{s<x}\frac{x^2}{(\log x)^{2c}}e^{-i2\pi st/x}\nonumber\\
		&\ll&\frac{x}{(\log x)^{2c}}\nonumber,	
	\end{eqnarray}	

	where $c>0$ is an arbitrary constant. Quod erat demonstrandum.
\end{proof}

%SSSSSSSSSSSSSSSSSSSSSSSSSSSSSSSSSSSSSSSSSSSSSSS
%SSSSSSSSSSSSSSSSSSSSSSSSSSSSSSSSSSSSSSSSSSSSSSS
%SSSSSSSSSSSSSSSSSSSSSSSSSSSSSSSSSSSSSSSSSSSSSSS
%SSSSSSSSSSSSSSSSSSSSSSSSSSSSSSSSSSSSSSSSSSSSSSS
%SSSSSSSSSSSSSSSSSSSSSSSSSSSSSSSSSSSSSSSSSSSSSSS
%SSSSSSSSSSSSSSSSSSSSSSSSSSSSSSSSSSSSSSSSSSSSSSS
%SSSSSSSSSSSSSSSSSSSSSSSSSSSSSSSSSSSSSSSSSSSSSSS
\section{Nonlinear Autocorrelation Functions Results}\label{S8009MN}
The number of squarefree integers have the following asymptotic formulas.

\begin{thm} \label{thm9339MN.107} Let $\mu: \mathbb{Z} \longrightarrow \{-1,0,1\}$ be the Mobius function. Then, for any sufficiently large number $x\geq1$, 
	\begin{equation} 
		\sum_{n <x} \mu^2(n) =\frac{6}{\pi^2}x+O \left (x^{1/2} \right ). \nonumber
	\end{equation} 
\end{thm}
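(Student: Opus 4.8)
The plan is to reduce the count of squarefree integers to a short Dirichlet series by means of the elementary convolution identity
\[
\mu^2(n)=\sum_{d^2\mid n}\mu(d),
\]
valid for every $n\geq 1$. This follows by multiplicativity: both sides are multiplicative in $n$, and at a prime power $p^a$ the right-hand side is $\sum_{0\le j\le a/2}\mu(p^j)=\mu(1)+\mu(p)+\cdots$, which equals $1$ when $a\le 1$ and $0$ when $a\ge 2$, matching $\mu^2(p^a)$.

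First I would substitute this identity into $\sum_{n<x}\mu^2(n)$ and interchange the order of summation. Since $d^2\mid n$ and $n<x$ force $d<\sqrt{x}$, this gives
\[
\sum_{n<x}\mu^2(n)=\sum_{d<\sqrt{x}}\mu(d)\left\lfloor\frac{x}{d^2}\right\rfloor+O(1),
\]
where the $O(1)$ absorbs the harmless boundary discrepancy in counting the integers $1\le n<x$ divisible by $d^2$. Next I would write $\lfloor x/d^2\rfloor=x/d^2+O(1)$; the outer sum ranges over at most $\sqrt{x}$ values of $d$, so the total contribution of these $O(1)$ errors is $O(\sqrt{x})$, leaving
\[
\sum_{n<x}\mu^2(n)=x\sum_{d<\sqrt{x}}\frac{\mu(d)}{d^2}+O\!\left(\sqrt{x}\right).
\]

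Then I would complete the truncated series to the full series, using $\sum_{d\ge 1}\mu(d)/d^2=1/\zeta(2)=6/\pi^2$. The discarded tail is bounded trivially by
\[
\left|\sum_{d\ge\sqrt{x}}\frac{\mu(d)}{d^2}\right|\le\sum_{d\ge\sqrt{x}}\frac{1}{d^2}\ll\frac{1}{\sqrt{x}},
\]
so after multiplication by $x$ it contributes a further $O(\sqrt{x})$. Assembling the three pieces gives $\sum_{n<x}\mu^2(n)=\frac{6}{\pi^2}x+O(x^{1/2})$, as asserted.

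There is no genuine obstacle here; the argument is entirely elementary. The only points requiring care are verifying the convolution identity for $\mu^2$ and checking that both error sources — the truncation of each floor function and the truncation of the Dirichlet series $\sum\mu(d)/d^2$ — are honestly of size $x^{1/2}$ rather than larger. I would also note that the exponent $1/2$ is the natural barrier for this elementary approach: any asymptotically smaller error term would require exploiting cancellation in $\sum_{d\le y}\mu(d)$ (for instance via the zero-free region of $\zeta(s)$), which is not needed for the statement as given.
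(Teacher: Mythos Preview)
Your proposal is correct and follows exactly the route indicated by the paper, which simply says ``Use identity $\mu(n)^2=\sum_{d^2\mid n}\mu(d)$, and other elementary routines, or confer to the literature.'' You have filled in precisely those elementary routines (swapping summation, replacing floors, completing the Dirichlet series), and your error bookkeeping is accurate.
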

\begin{proof}[\textbf{Proof}] Use identity $\mu(n)^2=\sum_{d^2\mid n}\mu(d)$, and other elementary routines, or confer to the literature.
\end{proof}

The constant coincides with the density of squarefree integers. Its approximate numerical value is
\begin{equation}\label{eq9339MN.72}
	\frac{6}{\pi^2}=\prod_{p\geq 2}\left ( 1-\frac{1}{p^2}\right )=0.607988295164627617135754\ldots,
\end{equation}
where $p\geq2$ ranges over the primes. The remainder term
\begin{equation}
	E(x)=\sum_{n <x} \mu^2(n) -\frac{6}{\pi^2}x
\end{equation} 
is a topic of current research, its optimum value is expected to satisfies the upper bound $E(x)=O(x^{1/4+\varepsilon})$ for any small number $\varepsilon>0$. Currently, $E(x)=O\left (x^{1/2}e^{-c\sqrt{\log x}}\right )$ is the best unconditional remainder term.\\

%KKKKKKKKKKKKKKKKKK
Assuming $t\ne0$, the earliest result for the autocorrelation of the squarefree indicator function $\mu^2(n)$ appears to be
\begin{equation}\label{eq8009MN.100}
	\sum_{n <x}\mu^2(n) \mu^2(n+t)=cx+O\left (x^{2/3} \right ),
\end{equation}
where $c>0$ is a constant, this is proved in \cite{ML1947}. Except for minor adjustments, the generalization to the $k$-tuple autocorrelation function has nearly the same structure.

\begin{thm}\label{thm8009MN.200}  Let $ q\ne0$, $a_0,a_1, \ldots,a_{k-1}$ be small integers, such that $0\leq a_0<a_1<\cdots<a_{k-1}$. Let $x\geq 1$ be a large number, and let $\mu: \mathbb{Z} \longrightarrow \{-1,0,1\}$ be the Mobius function. Then, 
	\begin{equation}\label{eq8009MN.200}
		\sum_{n <x}\mu^2(n+a_0)\mu^2(n+a_1)\cdots \mu^2(n+a_{k-1})=s_{k}x+O\left (x^{2/3+\varepsilon} \right ),\nonumber
	\end{equation}
	where the constant is given by the convergent product
	\begin{equation}\label{eq8009MN.210A}
		s_{k}=\prod_{p\geq 2}\left ( 1-\frac{\varpi(p)}{p^2}\right )>0,
	\end{equation}
	and
	\begin{equation}\label{eq8009MN.210B}
		\varpi(p)=\#\{m\leq p^2: qm+a_i\equiv 0 \bmod p^2 \text{ for } i=0,1,2, ..., k-1\}.
	\end{equation}
	The small number $\varepsilon>0$ and the implied constant depends on $q\ne0$.
\end{thm}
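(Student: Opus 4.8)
The plan is to treat the left-hand side of \eqref{eq8009MN.200} as a $k$-dimensional squarefree sieve, in the spirit of \eqref{eq8009MN.100}, with a single truncation parameter, and to read off the constant $s_k$ as an Euler product. (For general $q$ as in \eqref{eq8009MN.210B} one replaces each $n+a_i$ by $qn+a_i$ throughout, with no change of substance.) First I would apply $\mu^2(m)=\sum_{d^2\mid m}\mu(d)$ to each of the $k$ factors, truncated at a cutoff $z$: write $\mu^2(m)=\sum_{d\le z,\,d^2\mid m}\mu(d)+r_z(m)$, where $r_z(m)$ vanishes unless $m$ is divisible by the square of some integer exceeding $z$. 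Expanding the product of the $k$ truncated sums and interchanging summations reduces matters to evaluating, for each $k$-tuple $\mathbf d=(d_0,\dots,d_{k-1})$ with all $d_i\le z$, the count $\#\{\,n<x:\ d_i^2\mid n+a_i\text{ for all }i\,\}$.

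By the Chinese remainder theorem this count is controlled by the simultaneous congruences $n\equiv -a_i\pmod{d_i^2}$, which are solvable precisely when $\gcd(d_i,d_j)^2\mid a_i-a_j$ for all $i\ne j$; when solvable they single out one residue class modulo $L(\mathbf d)=\lcm(d_0^2,\dots,d_{k-1}^2)$, so the count equals $\rho(\mathbf d)\,x/L(\mathbf d)+O(\rho(\mathbf d))$ with $\rho(\mathbf d)\in\{0,1\}$. Since the $a_i$ are distinct, solvability forces every $\gcd(d_i,d_j)$ to divide a fixed integer, which will be used to ensure convergence. Inserting this and completing the sum over $\mathbf d$ to all of $\N^k$, the main term is
\begin{equation}
x\sum_{\mathbf d}\frac{\mu(d_0)\cdots\mu(d_{k-1})\,\rho(\mathbf d)}{L(\mathbf d)},\nonumber
\end{equation}
and the summand is jointly multiplicative in the coordinates of $\mathbf d$, so the series factors as an Euler product. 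At a prime $p$ only the choices $d_i\in\{1,p\}$ survive (the Möbius function annihilates higher powers), and an inclusion--exclusion over which of $n+a_0,\dots,n+a_{k-1}$ are forced divisible by $p^2$ collapses the local factor to $1-\varpi(p)/p^2$, with $\varpi(p)$ as in \eqref{eq8009MN.210B}. Since $\varpi(p)/p^2\ll p^{-2}$, the product $s_k=\prod_{p\ge2}(1-\varpi(p)/p^2)$ converges absolutely (and is nonnegative, vanishing exactly when some $p^2$ already obstructs the congruences), giving the asserted constant.

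For the error I would keep three terms. The truncation error is $O(x^{1+\varepsilon}/z)$, because $r_z(n+a_i)\ne0$ only for the $O(x/z)$ integers $n<x$ for which some $n+a_i$ has a square divisor exceeding $z^2$, while the truncated factors are $O(x^\varepsilon)$ by the divisor bound; the $O(\rho(\mathbf d))$ remainders summed over the $d_i\le z$ contribute $O(z^k)$; and completing the Euler product costs $x\sum_{\max_i d_i>z}|\,\cdot\,|/L(\mathbf d)\ll x/z$, using $L(\mathbf d)\gg(d_0\cdots d_{k-1})^2$ on the support together with the gcd restriction. Thus $\sum_{n<x}\prod_i\mu^2(n+a_i)=s_k\,x+O(x^{1+\varepsilon}/z+z^k)$, and the choice $z=x^{1/(k+1)}$ gives $O(x^{k/(k+1)+\varepsilon})$; for $k=2$ this recovers \eqref{eq8009MN.100} up to the harmless $\varepsilon$.

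The main obstacle is that the direct expansion above only reaches the exponent $k/(k+1)$, whereas \eqref{eq8009MN.200} asks for the uniform exponent $2/3+\varepsilon$. To get there one should not expand all $k$ factors at once but peel off a single factor, $\mu^2(n+a_{k-1})=\sum_{d\le z}\mu(d)\,[\,d^2\mid n+a_{k-1}\,]+r_z$, and apply the $(k-1)$-tuple case of the theorem to each progression $n\equiv n_0\pmod{d^2}$; here the modulus $d^2$ times the progression length $x/d^2$ is $\asymp x$ independently of $d$, which in principle allows the per-$d$ remainders to be summed after the $\mu(d)$ weighting without loss. Closing this induction with an error that stays $O(x^{2/3+\varepsilon})$ is the delicate point: it requires an inductive statement that records how the remainder in the $(k-1)$-tuple asymptotic depends on the modulus of the progression, and then a check that $\sum_{d\le z}|\mu(d)|$ times that remainder does not exceed $x^{2/3+\varepsilon}$. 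Everything else --- the explicit value of $s_k$ and the absolute convergence of the singular series --- is routine.
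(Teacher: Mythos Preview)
The paper does not actually prove this theorem: its entire ``proof'' is the sentence ``Consult \cite{ML1947}, \cite[Theorem 1.2]{MI2017}, and the literature for additional details.'' So there is nothing to compare your argument against except those references, and your sketch is precisely the standard squarefree-sieve argument one finds there: expand each $\mu^2$ via $\mu^2(m)=\sum_{d^2\mid m}\mu(d)$, truncate, count residue classes by the Chinese remainder theorem, and read off the singular series as an Euler product. Your treatment of the main term and the identification of $s_k$ are correct and essentially complete.

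Your observation about the exponent is well taken and is the only real content missing from the paper. The naive $k$-fold expansion indeed gives only $O(x^{k/(k+1)+\varepsilon})$, and the uniform $2/3+\varepsilon$ claimed in the statement requires more care. The inductive peeling you propose is one route; another (closer to what is in the cited thesis) is to observe that the $O(\rho(\mathbf d))$ remainders need not be summed over all of $[1,z]^k$ blindly: the compatibility condition $\gcd(d_i,d_j)^2\mid a_i-a_j$ forces the $d_i$ to be pairwise almost coprime, so $L(\mathbf d)\asymp (d_0\cdots d_{k-1})^2$ on the support, and one can restrict to tuples with $L(\mathbf d)\le x$ (those with $L(\mathbf d)>x$ contribute $0$ or $1$ integers $n$ each and are absorbed into the $x/z$ tail). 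This collapses the remainder sum to a divisor-type count of size $O(x^{1/2+\varepsilon})$ rather than $z^k$, after which $z=x^{1/3}$ gives the stated error. Either way, the point you flagged is exactly the step that needs work, and the paper simply outsources it.
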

\begin{proof}[\textbf{Proof}]Consult \cite{ML1947}, \cite[Theorem 1.2]{MI2017}, and the literature for additional details.
\end{proof}

\begin{thm}\label{thm8009MN.350}  Let $x\geq 1$ be a large number, and let $\mu: \mathbb{Z} \longrightarrow \{-1,0,1\}$ be the Mobius function. If $t\ne0$ is a fixed integer,  then, 
	\begin{equation}\label{eq8009MN.350}
		\sum_{n \leq x}\mu(n)^2 \mu(n+t)=O\left (\frac{x}{(\log x)^c} \right ),\nonumber
	\end{equation}
	where $c> 0$ is an arbitrary constant.
\end{thm}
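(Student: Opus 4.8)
The plan is to use the squarefree–detecting identity $\mu(n)^{2}=\sum_{d^{2}\mid n}\mu(d)$, already invoked in Theorem \ref{thm9339MN.107}, to convert $S$ into a combination of sums of $\mu$ over arithmetic progressions with square moduli, and then to estimate each such progression sum by additive–character orthogonality together with the uniform twisted exponential sum bound of Theorem \ref{thm3970ME.300}. Throughout I keep $t$ fixed, so the $O(1)$ indices $n$ with $n+t\le 0$ are harmless and $\log(x+t)$ may be replaced by $\log x$.

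First I would interchange the order of summation and substitute $n'=n+t$, obtaining
\[
S:=\sum_{n\le x}\mu(n)^{2}\mu(n+t)=\sum_{d\le\sqrt{x}}\mu(d)\sum_{\substack{t<n'\le x+t\\ d^{2}\,\mid\,n'-t}}\mu(n').
\]
Next I would fix a cutoff $D=(\log x)^{c}$ and split $S=S_{1}+S_{2}$ according to $d\le D$ and $D<d\le\sqrt{x}$. For the tail, the crude estimate $\bigl|\sum_{d^{2}\mid n'-t}\mu(n')\bigr|\le x/d^{2}+1$ gives $|S_{2}|\ll\sum_{d>D}x/d^{2}\ll x/D=x/(\log x)^{c}$, which is already of the required size.

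For the main piece $S_{1}$ I would detect the divisibility $d^{2}\mid n'-t$ by additive characters modulo $d^{2}$ and apply Theorem \ref{thm3970ME.300}(i), with its arbitrary exponent chosen to be $2c$, to each twisted M\"obius sum:
\[
\sum_{\substack{t<n'\le x+t\\ d^{2}\,\mid\,n'-t}}\mu(n')=\frac{1}{d^{2}}\sum_{b=0}^{d^{2}-1}e^{-2\pi i bt/d^{2}}\sum_{t<n'\le x+t}\mu(n')\,e^{2\pi i bn'/d^{2}}\ \ll\ \frac{x}{(\log x)^{2c}},
\]
the implied constant being independent of $d$. Summing over $d\le D$ then gives $|S_{1}|\le\sum_{d\le D}|\mu(d)|\cdot x/(\log x)^{2c}\ll D\,x/(\log x)^{2c}=x/(\log x)^{c}$, and together with the tail estimate this yields $S\ll x/(\log x)^{c}$.

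The step I expect to be the real obstacle, and the one deserving care, is the calibration of the cutoff against the exponent. The bound of Theorem \ref{thm3970ME.300} applies to a twisted M\"obius sum over the \emph{entire} interval $[1,x]$ and provides no gain from the length $x/d^{2}$ of the progression, so it cannot be summed over all $d\le\sqrt{x}$. The remedy is to truncate $d$ at a power of $\log x$ and to exploit that the exponent in Theorem \ref{thm3970ME.300} is arbitrary: with $D=(\log x)^{c}$ and exponent $2c$, the number of surviving moduli $d\le D$ is $O((\log x)^{c})$, which the per–modulus saving $(\log x)^{2c}$ more than absorbs, while the crudely bounded tail stays below $x/(\log x)^{c}$. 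Since no nontrivial information about $\mu$ in progressions to moduli as large as $x^{1/2-\varepsilon}$ is used, $O(x/(\log x)^{c})$ is essentially the natural strength of this argument, in agreement with the statement.
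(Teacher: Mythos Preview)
Your proof is correct and follows the same skeleton as the paper's argument---insert the identity $\mu(n)^{2}=\sum_{d^{2}\mid n}\mu(d)$, swap the order of summation, split at a cutoff in $d$, bound the large-$d$ tail trivially, and handle small $d$ via an estimate for $\sum_{m\equiv b\bmod q}\mu(m)$---but the two proofs diverge in how that last step is executed. The paper takes the cutoff at $d\le x^{\varepsilon}$ and invokes an external result (Corollary~\ref{cor2225P.550}, a Siegel--Walfisz/Bombieri--Vinogradov type bound for the M\"obius function in progressions) to control the small-$d$ range, which buys a power-saving tail $O(x^{1-\varepsilon})$. You instead set the cutoff at $D=(\log x)^{c}$ and derive the needed progression bound on the spot by additive-character orthogonality modulo $d^{2}$ together with Theorem~\ref{thm3970ME.300}; this keeps the argument entirely self-contained within the paper's stated tools at the cost of a weaker (but still adequate) tail $O(x/(\log x)^{c})$. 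The trade-off you flagged---that Davenport's bound gives no saving in the modulus, forcing $D$ to stay at a power of $\log x$ and requiring the exponent $2c$ to absorb the $D$ surviving moduli---is exactly the point, and your calibration is correct. One cosmetic remark: Theorem~\ref{thm3970ME.300} is stated for $\alpha\ne 0$, so the $b=0$ term in your character sum should strictly be handled separately by Theorem~\ref{thm2222.500}(i), which gives an even stronger bound.
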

\begin{proof}[\textbf{Proof}] Substitute the identity $\mu(n)^2=\sum_{d^2\mid n}\mu(d)$, and switching the order of summation yield
	\begin{eqnarray}\label{eq8009MN.360B}
		\sum_{n \leq x}\mu(n)^2 \mu(n+t)&=&\sum_{n \leq x} \mu(n+t)\sum_{d^2\mid n}\mu(d)\\
		&=&\sum_{d^2 \leq x}\mu(d) \sum_{\substack{n\leq x\\d^2\mid n}}\mu(n+t)\nonumber\\
		&=&\sum_{d^2 \leq x^{2\varepsilon}}\mu(d) \sum_{\substack{n\leq x\\d^2\mid n}}\mu(n+t)+\sum_{x^{2\varepsilon}< d^2 \leq x}\mu(d) \sum_{\substack{n\leq x\\d^2\mid n}}\mu(n+t)\nonumber,
	\end{eqnarray}
	where $\varepsilon\in (0,1/4)$. Applying Corollary \ref{cor2225P.550} to the first subsum in the partition yields
	\begin{eqnarray}\label{eq8009MN.370B}
		\sum_{d^2 \leq x^{2\varepsilon}}\mu(d) \sum_{\substack{n\leq x\\d^2\mid n}}\mu(n+t)&\leq&\sum_{q \leq x^{\varepsilon}} \bigg |\mu(d) \sum_{\substack{n\leq x\\m\equiv b \bmod q}}\mu(m)\bigg |\\
		&=&O\left( \frac{x}{(\log x)^{c}}\right) \nonumber,
	\end{eqnarray}
	where $q=d^2$. An estimate of the second subsum in the partition yields
	\begin{eqnarray}\label{eq8009MN.380B}
		\sum_{x^{2\varepsilon}< d^2 \leq x}\mu(d) \sum_{\substack{n\leq x\\d^2\mid n}}\mu(n+t)&\leq&\sum_{x^{2\varepsilon}< d^2 \leq x} \sum_{\substack{n\leq x\\d^2\mid n}}1\\
		&\ll&x\sum_{d^2 \leq x}\frac{1}{d^2}\nonumber\\
		&\ll&x^{1-\varepsilon}\nonumber.
	\end{eqnarray}
	Summing \eqref{eq8009MN.370B} and \eqref{eq8009MN.380B} completes the verification.
\end{proof}

Given an integer $k<\log x$, the same technique can be extended to prove the more general nonlinear autocorrelation function
\begin{equation}\label{eq8009MN.400}
	\sum_{n \leq x}\mu(n+a_0)^{v_0}\mu(n+a_1)^{v_1}\cdots \mu(n+a_{k-1})^{v_{k-1}}=O\left (\frac{x}{(\log x)^c} \right ),
\end{equation}
where $a_0<a_1<\cdots<a_{k-1}$ is an integer $k$-tuple, and at least 1 of the integers $v_0, v_1, \ldots v_{k-1}\geq 0 $ is odd.

%SSSSSSSSSSSSSSSSSSSSSSSSSSSSSSSSSSSSSSSSSSSSSSSS
%SSSSSSSSSSSSSSSSSSSSSSSSSSSSSSSSSSSSSSSSSSSSSSSS
%SSSSSSSSSSSSSSSSSSSSSSSSSSSSSSSSSSSSSSSSSSSSSSSS
%SSSSSSSSSSSSSSSSSSSSSSSSSSSSSSSSSSSSSSSSSSSSSSSS
%SSSSSSSSSSSSSSSSSSSSSSSSSSSSSSSSSSSSSSSSSSSSSSSS
%SSSSSSSSSSSSSSSSSSSSSSSSSSSSSSSSSSSSSSSSSSSSSSSS
\section{Problems} \label{S6666P}

\subsection{Exponential Sums problems}
\begin{prob}\label{P6666.100} {\normalfont Let $x\geq x_0$ be a large number, and let $\alpha\ne0$ be a real number. Show that $$\sum_{ n<x}e^{i2 \pi n \alpha} \ll \min{1, ||\alpha||^{-1}}),$$
		where $||\alpha||=\min\{|n-\alpha|:n\geq0\}$. 
	}
\end{prob}

\subsection{Mobius autocorrelation function over the integers problems}

\begin{prob}\label{P6666.200MA} {\normalfont Let $x\geq x_0$ be a large integer, and let $a_0<a_1<a_{2}<x$ be an integer triple. Use exponential sums technique as in Theorem \ref{thmP1199.800} to compute a nontrivial estimate of the autocorrelation function 	$$\sum_{ n\leq x}\mu(n+a_0)\mu(n+a_1)\mu(n+a_{2})=O\left( \frac{x}{(\log x)^{c}}\right) .$$ 
	}
\end{prob}

\begin{prob}\label{P6666.200MF} {\normalfont Let $x\geq x_0$ be a large integer, and let $a_0<a_1<\cdots<a_{k-1}<x$ be an integer $k$-tuple. Determine the maximal length $k\overset{?}{\ll} \log x$ possible to have a nontrivial autocorrelation function 	$$\sum_{ n<x}\mu(n+a_0)\mu(n+a_1)\cdots \mu(n+a_{k-1})=O\left( \frac{x}{(\log x)^{c}}\right) .$$ 
	}
\end{prob}
\begin{prob}\label{P6666.200} {\normalfont Let $x\geq x_0$ be a large number, and let $a<b$ be small integers. Determine the oscillation results for the autocorrelation function 	$$\sum_{ n<x}\mu(n+a)\mu(n+b)=\Omega(x^{\beta})$$ over the integer. In \cite{KS2022}, there is some evidence and material for random autocorrelation functions.
	}
\end{prob}

\begin{prob}\label{P6666.215} {\normalfont Let $x\geq x_0$ be a large number, let $y\geq x^{1/2}$, and let $a<b$ be small integers. Determine a nontrivial upper bound for the autocorrelation function 
		$$\sum_{x\leq n<x+y}\mu(n+a)\mu(n+b)$$ over the integers in the short interval $[x,x+y]$.
	}
\end{prob}
\begin{prob}\label{P6666.220} {\normalfont Let $x\geq x_0$ be a large number, and let $a_0<a_1<a_2$ be small integers. Either conditionally or unconditionally, use sign patterns techniques to verify the asymptotic for the autocorrelation function $$\sum_{ n<x}\mu(n+a_0)\mu(n+a_1)\mu(n+a_2)=O\left( \frac{x}{(\log x)^{c}}\right) $$ over the integers. 
	}
\end{prob}

\begin{prob}\label{P6666.230} {\normalfont Let $x\geq x_0$ be a large number, let $1\leq a<q\ll (\log x)^c$, with $c\geq0$, and let $a_0<a_1$ be small integers. Determine a nontrivial upper bound for the autocorrelation function 
		$$\sum_{\substack{x\leq n\\n\equiv a \bmod q}}\mu(n+a_0)\mu(n+a_1)$$ over the integers in arithmetic progression-.
	}
\end{prob}

\subsection{Mobius autocorrelation function over the shifted primes problems}
\begin{prob}\label{P6666.200b} {\normalfont Let $x\geq x_0$ be a large number, and let $a<b$ be small integers. Determine the oscillation results for the autocorrelation function
		$$\sum_{ p<x}\mu(p+a)\mu(p+b)=\Omega(x^{\beta})$$ over the shifted primes, where  $\beta \in (1/2,1)$. In \cite{KS2022}, there is some evidence and material for random autocorrelation functions.
	}
\end{prob}
\begin{prob}\label{P6666.205} {\normalfont Let $x\geq x_0$ be a large number, and let $a<b$ be small integers. Assume the RH. Determine the conditional results for the autocorrelation function $$\sum_{ p<x}\mu(p+a)\mu(p+b)=O(x^{\beta})$$ over the shifted primes.
	}
\end{prob}

\begin{prob}\label{P6666.205b} {\normalfont Let $x\geq x_0$ be a large number, and let $a<b$ be small integers. Assume the RH. Determine the conditional results for the autocorrelation function
		$$\sum_{ p<x}\mu(p+a)\mu(p+b)=O(x^{\beta})$$ over the shifted primes, where  $\beta \in (1/2,1)$.
	}
\end{prob}

\begin{prob}\label{P6666.215b} {\normalfont Let $x\geq x_0$ be a large number, let $y\geq x^{1/2}$, and let $a<b$ be small integers. Determine a nontrivial upper bound for the autocorrelation function$$\sum_{x\leq p<x+y}\mu(p+a)\mu(p+b)$$ over the shifted primes over the short interval $[x,x+y]$.
	}
\end{prob}

\begin{prob}\label{P6666.225b} {\normalfont Let $a_0<a_1<\cdots<a_{k-1}$ be a subset of small integers. Determine the maximal value of $k\overset{?}{\ll} \log x$ for which the autocorrelation
		$$\sum_{p<x}\mu(p+a_0)\mu(p+a_1)\cdots\mu(p+a_{k-1})$$ over the shifted primes,	have nontrivial upper bounds.
	}
\end{prob} 

\begin{prob}\label{P6666.230b} {\normalfont Let $x\geq x_0$ be a large number, let $1\leq a<q\ll (\log x)^c$, with $c\geq0$, and let $a_0<a_1$ be small integers. Determine a nontrivial upper bound for the autocorrelation function $$\sum_{\substack{x\leq n\\n\equiv a \bmod q}}\mu(p+a_0)\mu(p+a_1)$$ over the shifted primes in arithmetic progression.
	}
\end{prob}

\subsection{Liouville autocorrelation function over the integers problems}

\begin{prob}\label{P6666.200B} {\normalfont Let $x\geq x_0$ be a large number, and let $a<b$ be small integers. Determine the oscillation results for the autocorrelation functions 	$$\sum_{ n<x}\lambda(n+a)\lambda(n+b)=\Omega(x^{\beta})$$ over the integers. In \cite{KS2022}, there is some evidence and material for random autocorrelation functions.
	}
\end{prob}
\begin{prob}\label{P6666.205B} {\normalfont Let $x\geq x_0$ be a large number, and let $a<b$ be small integers. Assume the RH. Determine the conditional results for the autocorrelation functions $$\sum_{ n<x}\lambda(n+a)\lambda(n+b)=O(x^{\beta})$$ over the integers, where  $\beta \in (1/2,1)$.
	}
\end{prob}

\begin{prob}\label{P6666.215B} {\normalfont Let $x\geq x_0$ be a large number, let $y\geq x^{1/2}$, and let $a<b$ be small integers. Determine a nontrivial upper bounds for the autocorrelation function 
		$$\sum_{x\leq n<x+y}\lambda(n+a)\lambda(n+b)$$ over the integers over the short interval $[x,x+y]$.
	}
\end{prob}

\begin{prob}\label{P6666.220B} {\normalfont Let $x\geq x_0$ be a large number, and let $a_0<a_1<a_2$ be small integers. Use sign patterns techniques to verify the asymptotic for the autocorrelation function $$\sum_{ n<x}\lambda(n+a_0)\lambda(n+a_1)\lambda(n+a_2)=O\left( xe^{-c\sqrt{\log x}}\right) $$ over the integers
	}
\end{prob}

\begin{prob}\label{P6666.225B} {\normalfont Let $a_0<a_1<\cdots<a_{k-1}$ be a subset of small integers. Determine the maximal value of $k\overset{?}{\ll} \log x$ for which the autocorrelation
		$$\sum_{n<x}\lambda(n+a_0)\lambda(n+a_1)\cdots\lambda(n+a_{k-1})$$ over the integers.
	}
\end{prob} 
\begin{prob}\label{P6666.230B} {\normalfont Let $x\geq x_0$ be a large number, let $1\leq a<q\ll (\log x)^c$, with $c\geq0$, and let $a_0<a_1$ be small integers. Determine a nontrivial upper bound for the autocorrelation function 
		$$\sum_{\substack{x\leq n\\n\equiv a \bmod q}}\lambda(n+a_0)\lambda(n+a_1)$$ over the integers in arithmetic progression.
	}
\end{prob}
\subsection{Liouville autocorrelation function over the shifted primes problems}
\begin{prob}\label{P6666.200a} {\normalfont Let $x\geq x_0$ be a large number, and let $a<b$ be small integers. Determine the oscillation results for the autocorrelation function
		$$\sum_{ p<x}\lambda(p+a)\lambda(p+b)=\Omega(x^{\beta})$$ over the shifted primes, where  $\beta \in (1/2,1)$. In \cite{KS2022}, there is some evidence and material for random autocorrelation functions.
	}
\end{prob}
\begin{prob}\label{P6666.205a} {\normalfont Let $x\geq x_0$ be a large number, and let $a<b$ be small integers. Assume the RH. Determine the conditional results for the autocorrelation function 
		$$\sum_{ p<x}\lambda(p+a)\lambda(p+b)=O(x^{\beta})$$ over the shifted primes, where  $\beta \in (1/2,1)$.
	}
\end{prob}

\begin{prob}\label{P6666.215a} {\normalfont Let $x\geq x_0$ be a large number, let $y\geq x^{1/2}$, and let $a<b$ be small integers. Determine a nontrivial upper bounds for the autocorrelation function $$\sum_{x\leq p<x+y}\lambda(p+a)\lambda(p+b)$$ over the shifted primes in the short interval $[x,x+y]$.
	}
\end{prob}

\begin{prob}\label{P6666.225a} {\normalfont Let $a_0<a_1<\cdots<a_{k-1}$ be a subset of small integers. Determine the maximal value of $k\overset{?}{\ll} \log x$ for which the autocorrelation
		$$\sum_{p<x}\lambda(p+a_0)\lambda(p+a_1)\cdots\lambda(p+a_{k-1})$$ over the shifted primes,	have nontrivial upper bounds.
	}
\end{prob}

\begin{prob}\label{P6666.230a} {\normalfont Let $x\geq x_0$ be a large number, let $1\leq a<q\ll (\log x)^c$, with $c\geq0$, and let $a_0<a_1$ be small integers. Determine a nontrivial upper bound for the autocorrelation function 
	$$\sum_{\substack{x\leq n\\n\equiv a \bmod q}}\lambda(p+a_0)\lambda(p+a_1)$$ over the shifted primes in arithmetic progression.
	}
\end{prob}

%BBBBBBBBBBBBBBBBBBBBBBBBBBBBBBBBBBBBBBBBBBBBBBBBB
%BBBBBBBBBBBBBBBBBBBBBBBBBBBBBBBBBBBBBBBBBBBBBBBBB
%BBBBBBBBBBBBBBBBBBBBBBBBBBBBBBBBBBBBBBBBBBBBBBB
%bbbbbbbbbbbbbbbbbbbbbbbbbbbbbbbbbbbbbbbbbbbbbbbbb
%\newpage

%\currfilename.\\

\end{document}